\documentclass[10pt]{article}
\usepackage{macros}
\usepackage{xurl}

\bibliography{references}

\usepackage{subfiles}

\title{Sharp lower bounds for the periodic maximal Schr\"odinger operator in higher dimensions}
\author{Inbo Gottlieb Fenves and Jia Hao Tan}
\date{}

\begin{document}

\maketitle

\begin{abstract}
    We prove sharp lower bounds for the Schr\"odinger maximal function in $\T^d$ for all dimensions $d \ge 2$, and as a corollary obtain sharp regularity conditions for pointwise convergence of the periodic Schr\"odinger equation. Combined with sufficiency results established by Compaan-Luc\'a-Staffilani \cite{CompaanLucaStaffilani2021}, this yields a full resolution of Carleson's problem up to endpoint for the periodic Schr\"odinger equation in all dimensions $d \ge 2$, and disproves the conjectured regularity condition of Miao-Yuan-Zhao \cite{MiaYuaZhaBar}, in contrast to the corresponding question in $\R^d$. We also prove sharp estimates on the dimensions of divergence sets for the equation for high dimensional tori $d \ge 2$. Our approach uses complex multiplication on abelian varieties.
\end{abstract}

\section{Introduction}

The primary goal of this paper is to prove the following theorem.

\begin{thm}\label{thm:frequencies}
    Fix $d \ge 2$. For each $R \in \N$ there exists an $R^{\frac{d}{d+2}}$-separated $\Sigma \subseteq \{-R,...,R\}^d$ with $|\Sigma| \asymp R^{\frac{2d}{d+2}}$ and an $R^{-1}$-separated set $X \subseteq \T^d$ with $|X| \asymp R^d$ so that for all $x \in \Nc_{c_dR^{-1}}(X)$ we have
    \begin{equation*}
        \sup_{0 \le t \le 1}\left| \sum_{\xi \in \Sigma} e \bigl(x \cdot \xi + t|\xi|^2 \bigr) \right| \gtrsim_d R^{\frac{d}{d+2}}|\Sigma|^{1/2}.
    \end{equation*}
    Moreover, the relevant times $t \in [0,1]$ are evenly spaced at scale $R^{-\frac{2d}{d+2}}$, and contribute equal mass.
\end{thm}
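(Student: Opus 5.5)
Following the abstract, the frequency set $\Sigma$ comes from a lattice with complex multiplication. Fix a CM field $K$ of degree $2d$ (or a suitable $\Z$-module $\Lambda\subseteq\Z^d$) and a quadratic form $Q$ for which $|\xi|^2$ restricted to a sublattice behaves like a norm form. The aim is to make the phases $t|\xi|^2$ for $\xi\in\Sigma$ take very few values modulo $1$ at rational times $t=a/q$, after a structured shift in $x$.

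\textbf{Scales.} Set $N=R^{\frac{d}{d+2}}$, which is the separation scale. Then $\Sigma$ is a subset of a rescaled lattice $N\Lambda'$ lying inside $\{-R,\dots,R\}^d$. This needs $(R/N)^d=R^{\frac{2d}{d+2}}$ points, consistent with $|\Sigma|\asymp R^{\frac{2d}{d+2}}$. We then look for times $t$ with $t|\xi|^2$ nearly constant modulo $1$ on $\Sigma$, up to a linear term $x\cdot\xi$. The key identity is
\[
 tN^2|\eta|^2 \equiv b\cdot\eta + c \pmod 1 \quad\text{for all } \eta \in \Lambda',
\]
for a suitable $b$ depending on $t$. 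This holds when $tN^2=a/q$ and the quadratic form $|\eta|^2$ restricted to $\Lambda'$ is well-behaved modulo $q$.

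\textbf{Gauss-sum lower bound.} For $t$ near $a/(qN^2)$, the sum factors as a Gauss sum over $\Lambda'/q\Lambda'$ times a smooth sum. Its size is $|\Sigma|\,q^{-d/2}$, attained on $x$ in a neighbourhood of the shifted rational points $b/(qN)$. We want this to be at least $N|\Sigma|^{1/2}$, that is, $q^{d/2}\lesssim |\Sigma|^{1/2}/N = 1$ up to constants. So essentially we need $q=O(1)$: a pure Talbot effect at scale $N$, with time spacing $N^{-2}=R^{-\frac{2d}{d+2}}$, matching the claimed even spacing. The times $t_j=j/N^2$ each give mass comparable to $|\Sigma|$ on sets $\{x: x\cdot N\eta\in\Z+\text{small}\}$, which are near $N^{-1}\Z^d$ shifted by $jb$.

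\textbf{Covering all of $X$.} For each $x$ in an $R^{-1}$-separated set of size $R^d$, some $t_j$ must work. The set of good $x$ at time $t_j$ is a neighbourhood of $N^{-1}\Lambda'^*+ j\beta$ of width $R^{-1}$. Here we need the orbit $\{j\beta\}_{j\le N^2}$, reduced modulo $N^{-1}\Z^d$, to be $R^{-1}$-dense in a positive proportion of the torus. This is a counting problem: $N^2\cdot N^d$ translates at resolution $R^{-1}$ versus $R^d$ cells. The exponents match exactly, since $N^{d+2}=R^d$. So we need the translates to be nearly disjoint, i.e. $j\beta$ equidistributed at scale $R^{-1}$ in $\T^d/N^{-1}\Z^d$.

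\textbf{Main obstacle.} The hard part is the step above: choosing $\Lambda'$ so that the drift vectors $j\beta$ spread out rather than collide. In the standard lattice $\Z^d$, the drift is rational with small denominators and collides; this is why the Miao--Yuan--Zhao exponent appears. The plan is to use a lattice from an ideal in an order of a CM field, with $|\cdot|^2$ the trace form $\mathrm{Tr}(\alpha\bar\alpha)$. Multiplication by units or by CM elements then acts on the drift, which gives a linear map of $\T^d$ with no small-height rational invariant structure. Equidistribution would follow from a Weyl or Diophantine estimate, with badly approximable coordinates supplied by the algebraic numbers of the CM field. Finally, embed $\Lambda'$ into $\Z^d$ by a rational basis change with bounded denominators, which costs only constants.
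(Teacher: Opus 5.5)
Your count is right: a coset of a sublattice with similitude ratio $N=R^{d/(d+2)}$, times spaced by $N^{-2}$ at which the quadratic part vanishes, and $N^2\cdot N^d=R^d$ good points. But there is a genuine gap exactly at the step you call the main obstacle, and the mechanism you sketch cannot close it. (Your $N$ is the square root of the paper's $N$.)

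As you set it up, $\Sigma$ lies inside a scalar dilate $N\Lambda'$ of an integral lattice. Then at $t_j=j/N^2$ one has $t_j|\xi|^2=j|\eta|^2\in\Z$, so there is no drift at all. If you pass to a coset $\gamma+N\Lambda'$, the linear term $2j\,\gamma\cdot\eta/N$ depends only on $j \bmod N$, and rational times with $q=O(1)$ add only $O(1)$ further patterns. So you get $O(N)$ translates and about $N^{d+1}$ good points instead of $N^{d+2}$, whatever $\Lambda'$ is. Rebalancing $N$ just reproduces the old $\frac{d}{2(d+1)}$ example.

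The missing idea is that the CM element should \emph{define the sublattice} rather than act on the drift. Identify $\Z^d\cong\Z[i]^{d/2}\cong\Os_K$ via a $\Z[i]$-basis, for $K/\Q(i)$ of degree $d/2$, so that Gaussian integers act by similitudes of the standard form. (Transporting a trace form by a rational change of basis does not in general give the Schr\"odinger form $|\xi|^2$, and no trace form is needed.) Then take $\Sigma=\gamma+\Bar{\varpi}^kA$ with $\varpi$ a split Gaussian prime, $|\varpi^k|^2=N^2$ and $\beta=\varpi^k\gamma$. The identity $\norm{\gamma+\Bar{\varpi}^ka}^2=N^2\norm{a}^2+\norm{\gamma}^2+2\Re\inner{\beta}{a}$ shows the quadratic part still dies at $t=j/N^2$. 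But now $\Lambda/\Bar{\varpi}^k\Lambda$ has exponent $N^2$ rather than $N$ (for $d=2$ it is cyclic of order $N^2$), so the drifts $z_j=-2j\beta/N^2$ have full period $N^2$. The good set is $X=M_{\varpi^k}^{-1}\{z_j\}$, using that $M_{\varpi^k}$ preserves Lebesgue measure on $\T^d$ while expanding distances by $N$.

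Second, spreading the $z_j$ out is not a Diophantine phenomenon. They are $N^2$-torsion points, so there are no irrational coordinates for badly approximable numbers to act on, and units play no role. An equidistribution estimate would also not give the deterministic $R^{-1}$-separation claimed. What is needed is that every nonzero element of the cyclic group $\langle\beta\rangle\subseteq\Lambda/N^2\Lambda$ has periodic norm $\gtrsim N^{2(1-1/d)}$. Then the $N^2$ points $z_j$ are optimally $N^{-2/d}$-separated, which pulls back to $R^{-1}$-separation of $X$. The paper proves this by taking $K/\Q(i)$ Galois with $p$ totally split, and $\beta$ equal to $1$ at a single prime $\Bar{\qfrak}\mid\Bar{\varpi}$ and $0$ at the others. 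Any lift of a nonzero multiple of $\beta$ then lies in an ideal of index $N^{2(d-1)}$, hence has field norm at least $N^{2(d-1)}$, while $|\Norm_\Q^K(\alpha)|\lesssim_K\norm{\alpha}^d$.

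Finally, odd $d$ needs a separate device, since CM fields have even degree (and a CM field of degree $2d$ would have the wrong rank anyway). The paper works on $\T^{2g}\times\T^1$ and dilates the last coordinate by $N$. It graphs that coordinate over $\langle\beta\rangle$ via the character $j\beta\mapsto jN^{2(1-1/d)} \bmod N^2$. Multiples with $N^{2/d}\nmid j$ are then separated in the last coordinate, and the remaining ones by a sharper ideal-index bound.
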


As an immediate corollary, we obtain the following lower bound on $(\ell^2,L^q)$-bounds for the periodic maximal Schr\"odinger operator.

\begin{thm}\label{thm:schrodingerlowerbd}
    For all $d \ge 2$ and all $R \in \N$ there exists a sequence $b \in \ell^2$ for which
    \begin{equation*}
        \norm{\sup_{0 \le t \le 1} \biggl| \sum_{\substack{\nbf \in \Z^d \\ |n_i| \le R}} b_\nbf e\bigl(x \cdot \nbf + t|\nbf|^2\bigr) \biggr| }_{L_x^q(\T^d)} \gtrsim_{q,d} R^{\frac{d}{d+2}}\norm{b}_{\ell^2}
    \end{equation*}
    holds simultaneously for $q \in [1,\infty]$.
\end{thm}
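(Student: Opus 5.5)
The plan is to take $b = \mathbf{1}_\Sigma$, where $\Sigma$ is the frequency set from Theorem~\ref{thm:frequencies}. Then $\Sigma \subseteq \{-R,\dots,R\}^d$, so the sum in Theorem~\ref{thm:schrodingerlowerbd} ranges exactly over $\nbf \in \Sigma$, and $\norm{b}_{\ell^2} = |\Sigma|^{1/2}$. Write $M(x)$ for the maximal function on the left-hand side. Theorem~\ref{thm:frequencies} gives
\begin{equation*}
    M(x) \gtrsim_d R^{\frac{d}{d+2}} |\Sigma|^{1/2}
\end{equation*}
for every $x \in \Nc_{c_d R^{-1}}(X)$. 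To pass to an $L^q$ bound, it then suffices to show that $\Nc_{c_dR^{-1}}(X)$ occupies a positive proportion of $\T^d$, with the proportion depending only on $d$.

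For the measure bound, I use that $X$ is $R^{-1}$-separated with $|X| \asymp R^d$. If $c_d < 1/2$, the balls $B(x, c_d R^{-1})$ for $x \in X$ are pairwise disjoint. If $c_d \ge 1/2$, I shrink to radius $R^{-1}/2$; this keeps the balls disjoint and inside the neighbourhood. In either case
\begin{equation*}
    \bigl|\Nc_{c_dR^{-1}}(X)\bigr| \ge |X| \cdot \min(c_d, 1/2)^d\, \omega_d\, R^{-d} \gtrsim_d 1 .
\end{equation*}
Call this constant $\delta_d > 0$.

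Now fix $q \in [1,\infty)$. Since the bound on $M$ holds pointwise on a set of measure at least $\delta_d$,
\begin{equation*}
    \norm{M}_{L^q(\T^d)} \ge \delta_d^{1/q}\, R^{\frac{d}{d+2}} |\Sigma|^{1/2} \ge \delta_d\, R^{\frac{d}{d+2}} \norm{b}_{\ell^2},
\end{equation*}
where the last step uses $\delta_d \le 1$. For $q=\infty$ the bound follows directly from the pointwise estimate, because the neighbourhood has positive measure. The same $b$ works for every $q$, and the implied constant is uniform in $q$.

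The only technical point is measurability of $M$. The sum is a trigonometric polynomial, continuous in $(x,t)$, so the supremum over $t\in[0,1]$ equals the supremum over rational $t$ and is therefore measurable; in fact it is lower semicontinuous. I expect no real obstacle here: all the difficulty lies in Theorem~\ref{thm:frequencies}, and this corollary only needs the disjoint-balls measure estimate.
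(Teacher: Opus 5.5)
Your argument is correct and takes the paper's route: the paper presents Theorem~\ref{thm:schrodingerlowerbd} as an immediate corollary of Theorem~\ref{thm:frequencies}. Its dimension-by-dimension proofs end with the step you describe, integrating the pointwise lower bound over $\Nc_{c_dR^{-1}}(X)$ and using the estimate $|\Nc_{c_dR^{-1}}(X)| \gtrsim 1$; the paper leaves that estimate implicit, and your disjoint-balls argument makes it explicit.
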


By a classical application of Stein's maximal principle, Theorem \ref{thm:schrodingerlowerbd} in turn implies the following necessary regularity condition for a.e. convergence of the Schr\"odinger operator.

\begin{cor}\label{cor:sobolevregularity}
    Let $d \ge 2$, and suppose $s < \frac{d}{d+2}$. Then there exists $f \in H^s(\T^d)$ and a set $E \subset \T^d$ with $|E| > 0$ so that for any $x \in E$,
    \begin{align*}
        \limsup_{t \to 0^{+}} \vert e^{it\Delta} f(x) \vert = \infty.
    \end{align*}
\end{cor}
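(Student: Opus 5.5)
We derive Corollary \ref{cor:sobolevregularity} from Theorem \ref{thm:schrodingerlowerbd} through Stein's maximal principle, arguing by contradiction. Assume the conclusion fails for some $s<\frac{d}{d+2}$. Then for every $f\in H^s(\T^d)$ we have $\limsup_{t\to0^+}|e^{it\Delta}f(x)|<\infty$ for almost every $x$. Hence the maximal operator $T^*f(x)=\sup_{0<t\le 1}|e^{it\Delta}f(x)|$ is finite almost everywhere for every $f\in H^s$, after handling the $\limsup$ versus $\sup$ issue (see below). The operators $f\mapsto e^{it\Delta}f$ are continuous from $H^s$ into $L^0$. They also commute with translations on the compact group $\T^d$. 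Stein's maximal principle, in the Hilbert-space form valid for $L^2$-type spaces on compact groups (equivalently, Nikishin--Stein factorization), then gives a weak-type bound
\begin{equation*}
|\{x: T^*f(x)>\lambda\}|\lesssim \lambda^{-2}\norm{f}_{H^s}^2 .
\end{equation*}
Since $H^s$ is isometric to $L^2$ through $f\mapsto (1-\Delta)^{s/2}f$, we may apply the principle to the translation-commuting sublinear operator $g\mapsto T^*(1-\Delta)^{-s/2}g$ on $L^2(\T^d)$. Translation invariance holds because $e^{it\Delta}$ is a Fourier multiplier.

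The weak-type bound implies a strong bound on $L^q$ for every $q<2$, for instance $\norm{T^*f}_{L^1(\T^d)}\lesssim \norm{f}_{H^s}$, because $\T^d$ has finite measure. Now take the sequence $b$ from Theorem \ref{thm:schrodingerlowerbd} at scale $R$, with $q=1$, and set $f=\sum_{|n_i|\le R} b_\nbf e(x\cdot\nbf)$. Then $\norm{f}_{H^s}\lesssim R^s\norm{b}_{\ell^2}$. After rescaling time, $\sup_{0\le t\le1}|\sum b_\nbf e(x\cdot\nbf+t|\nbf|^2)|$ equals $T^*f$ with the time range $[0,1/(4\pi^2)]$ or similar. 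Since frequencies are integers, $e^{it\Delta}$ is periodic in $t$, so sup over any period window equals sup over $(0,\delta]$? Not exactly, but sup over $[0,c]$ can be covered by finitely many translates in $t$. Each translate $e^{i(t+t_0)\Delta}f=e^{it\Delta}(e^{it_0\Delta}f)$ is again controlled by the same bound, and $e^{it_0\Delta}$ is an isometry on $H^s$. So we obtain $R^{\frac{d}{d+2}}\norm{b}_{\ell^2}\lesssim R^s\norm{b}_{\ell^2}$. Letting $R\to\infty$ contradicts $s<\frac{d}{d+2}$.

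The main technical point is the passage from ``$\limsup_{t\to0^+}$ finite a.e.'' to ``$\sup_{0<t\le1}$ finite a.e.'', which Stein's principle needs. I would handle it by working directly with the sequence of operators $T_k f=e^{it_k\Delta}f$ over a countable dense set of times $t_k\to0$, using continuity in $t$ of each $e^{it\Delta}f$ for $f$ smooth. Alternatively, one can choose the local maximal operator $\sup_{0<t<\delta}$ and note that the lower bound of Theorem \ref{thm:frequencies} already holds for times in any short window, after translating in $t$ and using invariance. The rest of the argument is routine: the final step builds $f$ as $\sum_j 2^{-j} f_{R_j}$ with randomly translated pieces, which is exactly what the proof of Stein's principle carries out.
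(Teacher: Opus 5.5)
Your route is the paper's: the paper deduces the corollary from Theorem \ref{thm:schrodingerlowerbd} by a one-line appeal to Stein's maximal principle. Your numerology is also correct: weak type $(2,2)$ for $g\mapsto T^*(1-\Delta)^{-s/2}g$ on $L^2(\T^d)$, then an $L^1$ bound on the finite-measure torus, then $R^{\frac{d}{d+2}}\lesssim R^s$.

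The gap is the step you flag yourself, and neither of your fixes closes it. The hypothesis only gives, for a.e.\ $x$, some $\delta(x)>0$ such that $t\mapsto e^{it\Delta}f(x)$ is bounded on $(0,\delta(x))$. It does not give a.e.\ finiteness of $\sup_{0<t\le1}$, nor of $\sup_{0<t<\delta}$ for any \emph{fixed} $\delta$. Applying the hypothesis to time translates $e^{it_0\Delta}f$ only gives boundedness near each single $t_0$, off a null set that depends on $t_0$, and there are uncountably many $t_0$ to cover.

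Stein's principle needs a countable family whose supremum is finite a.e. The natural way to get this from a $\limsup$ hypothesis is a sequence $t_n\to0$: then $\sup_n|e^{it_n\Delta}f(x)|<\infty$ a.e.\ because each $e^{it_n\Delta}f\in L^2$ is finite a.e. A ``countable dense set of times $t_k\to0$'' is not such a family. Continuity in $t$ of the test polynomials only lets you pass to a dense set of times in $[0,1]$, not to a set accumulating only at $0$. And with a fixed window $(0,\delta]$, your time-translation covering fixes the lower bound but not the a.e.\ finiteness.

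The missing idea is to put the peak times of all the test functions into one null sequence, and absorb a scale-dependent loss. For instance:
\begin{itemize}
\item Let $P_k$ be the polynomial of Theorem \ref{thm:frequencies} at scale $R_k\to\infty$, with peaks at the times $j/N_k$. Set $f_k(x)=P_k(q_kx)$ with integers $q_k\to\infty$ and $q_k\le R_k^{\epsilon}$.
\item Since $e^{it\Delta}f_k(x)=(e^{iq_k^2t\Delta}P_k)(q_kx)$ and $x\mapsto q_kx$ preserves Lebesgue measure on $\T^d$, $f_k$ has the same $L^1$ lower bound, now attained at the finitely many times $j/(q_k^2N_k)\in(0,q_k^{-2}]$. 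Meanwhile $\norm{f_k}_{H^s}\lesssim (q_kR_k)^s|\Sigma_k|^{1/2}$.
\item The union over $k$ of these time sets is a sequence $t_n\to0$. Stein's principle for $\{e^{it_n\Delta}(1-\Delta)^{-s/2}\}_n$ then gives $R_k^{\frac{d}{d+2}}\lesssim R_k^{s(1+\epsilon)}$, a contradiction for small $\epsilon$.
\end{itemize}
Equivalently, you can keep your time translations but use windows of length $\delta_k\to0$ slowly, losing only a factor $\delta_k$, and collect the finitely many peak times in each window into one null sequence. This is the same rescaling the paper uses in the proof of Theorem \ref{thm:fractalregularity} to place the peaks in $(0,q_n^{-2}]$.
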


The key distinction accounting for the discrepancy in regularity is an additional recurrence present in the periodic case. Note that, if $g \in \Mat_{d \times d}(\Z)$ has nonzero determinant, then the action of $g$ on $\R^d$ distorts the measure by a factor of $\det(g)$; on the other hand, $g$ descends to a well-defined endomorphism of $\T^d = \R^d/\Z^d$, and there $g$ preserves the Lebesgue measure. This will allow us to pull back a set of large peaks inside of a dilated copy of the torus while preserving measure estimates.

We are also able to obtain a fractal analogue of Corollary \ref{cor:sobolevregularity}. To state, we define the \textit{divergence set} of a function $f \colon \T^d \to \C$ by
\begin{equation*}
    \Dc(f) = \left\{ x \in \T^d : \text{$e^{it\Delta}f(x)$ does not converge to $f(x)$ as $t \to 0$}  \right\},
\end{equation*}
and for $s \ge 0$, define the \textit{divergence dimension} $\alpha_{\T^d}(s)$ by
\begin{equation*}
    \alpha_{\T^d}(s) = \sup_{f \in H^s(\T^d)} \dim_H \Dc(f).
\end{equation*}
Eceizabarrena-Luca showed \cite{Eceizabarrena2022} that
\begin{align}
    \alpha_{\T^d}(s) &= d,  &s < \frac{d}{2(d+1)} \label{eq:EFd} \\
    \alpha_{\T^d}(s) &\ge d+1-\frac{2(d+1)}{d}s, &  s \in \left[ \frac{d}{2(d+1)},\frac{d}{2} \right] \label{eq:ELlowerbd} \\
    \alpha_{\T^d}(s) &\le d+2-\frac{2(d+2)}{d}s, &s \in \left[ \frac{d}{d+2},\frac{d}{2} \right], \label{eq:ELupperbd}
\end{align}
and the following bound was conjectured in analogy with the Euclidean case:
\begin{equation}\label{eq:ELconj}
    \alpha_{\T^d}(s) = d+1 - \frac{2(d+1)}{d}s, \quad \quad s \in \left[ \frac{d}{2(d+1)},\frac{d}{2} \right].
\end{equation}
(For the case of $\alpha_{\R^d}(s)$, see \cite{LucaRogers2019}, \cite{DuGuthLi2017}, \cite{DuZhang2019}, \cite{DuKimWangZhang2020}, \cite{LucPon} and the references therein). Note that Corollary \ref{cor:sobolevregularity} disproves the conjectured \eqref{eq:ELconj} in the range $s \in \bigl( \frac{d}{2(d+1)}, \frac{d}{d+2}\bigr)$, and that $\Dc(f) = \emptyset$ when $s > d/2$ by the Sobolev embedding theorems. A priori, it is unclear for a general $f \in H^s(\T^d)$ whether or not $e^{it\Delta}f(x)$ is even well-defined $\Hc^\alpha$-a.e., where $\Hc^\alpha$ is the $\alpha$-dimensional Hausdorff measure, but it was shown in \cite{Eceizabarrena2022} that this expression is in fact $\Hc^\alpha$-a.e. well-defined (where by convention $e^{it\Delta}f(x)$ is the pointwise limit of its cubic partial Fourier sums) provided that $s > \frac{d-\alpha}{2}$, and therefore Theorem \ref{thm:fractalregularity} is well-posed in this regime.

We are able to show that \eqref{eq:ELconj} fails for \textit{every} $s \in \bigl( \frac{d}{2(d+1)},\frac{d}{2} \bigr)$, and in fact that the upper bound of \eqref{eq:ELupperbd} is sharp in its stated range, using a fractal analogue of Theorem \ref{thm:frequencies} with an essentially identical proof.

\begin{thm}\label{thm:fractalregularity}
    Suppose $d \ge 2$, $0 < \alpha \le d$, and set $\gamma_d(\alpha) = \frac{d}{2(d+2)}(d+2-\alpha)$. Then, if $0 \le s < \gamma_d(\alpha)$, there exists $f \in H^s(\T^d)$ and a Borel subset $E \subseteq \T^d$ with $\dim_H E \ge \alpha$ for which
    \begin{equation*}
        \limsup_{t \to 0} |e^{it\Delta}f(x)| = \infty
    \end{equation*}
    for every $x \in E$.
\end{thm}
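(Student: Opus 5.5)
The plan is to run the proof of Theorem \ref{thm:frequencies} in a \emph{fractal} version and then feed the resulting family of examples into a standard Hausdorff-measure divergence argument. The template is the one used by Eceizabarrena--Luc\`a for \eqref{eq:ELlowerbd} and by Luc\`a--Rogers in $\R^d$.

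\textbf{Step 1: a fractal analogue of Theorem \ref{thm:frequencies}.} For each scale $R$ and each $\alpha \in (0,d]$, I will build a frequency set $\Sigma_R \subseteq \{-R,\dots,R\}^d$ and a point set $X_R \subseteq \T^d$ with the following properties.
\begin{itemize}
\item $X_R$ is $R^{-1}$-separated, $|X_R| \asymp R^{\alpha}$, and it is spread out at every intermediate scale. Concretely, the $c_dR^{-1}$-neighbourhood of $X_R$, normalized, behaves like an $\alpha$-dimensional Frostman measure down to scale $R^{-1}$.
\item On that neighbourhood the maximal function is at least $R^{\beta}|\Sigma_R|^{1/2}$, with $\beta$ as large as possible.
\end{itemize}
To get this, I will use the same complex-multiplication recurrence: an integer matrix $g$ acts on $\T^d$ and preserves Lebesgue measure. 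The difference is that I will peak on a sparser set. I replace the full lattice of peaks by a sublattice of spacing $R^{-\alpha/d}$, and I take $\Sigma$ to be a correspondingly coarser, longer-separated lattice section. The parameters are then rebalanced so that the times $t$ still resolve each peak.

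The expected exponent is fixed by the heuristic for the upper bound \eqref{eq:ELupperbd}. The gain should be $R^{\gamma_d(\alpha)}$, with $\gamma_d(d) = \frac{d}{d+2}$ recovering Theorem \ref{thm:frequencies}. The exponent is linear in $d+2-\alpha$ because each dimension removed from $X$ permits a $\frac{d}{2(d+2)}$ gain through enlarging the separation of $\Sigma$. I will verify this exponent bookkeeping explicitly.

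\textbf{Step 2: from the examples to $f$ and $E$.} Fix $s<\gamma_d(\alpha)$ and choose dyadic scales $R_k \to \infty$ rapidly. Set
\[ f_k = R_k^{-s}\, k^{-2}\, |\Sigma_{R_k}|^{-1/2}\sum_{\xi\in\Sigma_{R_k}} e(x\cdot\xi), \qquad f = \sum_k f_k, \]
possibly after translating each $X_{R_k}$ or modulating each $f_k$ so the pieces interact well. Then $\|f_k\|_{H^s}\lesssim k^{-2}$, so $f \in H^s(\T^d)$. On $\Nc_{c_dR_k^{-1}}(X_{R_k})$ we have
\[ \sup_{t}|e^{it\Delta}f_k| \gtrsim k^{-2}R_k^{\gamma_d(\alpha)-s} \to \infty. \]
The relevant times are at most $1$. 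To force $t\to0$, I will use a Galilean/parabolic rescaling, or simply note that the peaks occur at times evenly spaced at scale $R^{-2d/(d+2)}$ (the ``Moreover'' clause). This lets me restrict to $t \le \delta_k \to 0$ at the cost of only a constant factor, provided the peaks are equidistributed in $t$.

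The cross terms must be controlled. For $j<k$, $f_j$ is a trigonometric polynomial of degree at most $R_j$, so it is uniformly bounded by $R_j^{d}$ and is smooth in $t$. For $j>k$, the total $L^2$ contribution is small, so these terms can be discarded on a large subset via Chebyshev against the Frostman measure. Rapid growth of $R_k$ makes all cross terms negligible.

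\textbf{Step 3: the dimension of $E$.} Let $E = \limsup_k \Nc_{c R_k^{-1}}(X_{R_k})$, intersected with the good sets from Step 2. On $E$ the limsup of $|e^{it\Delta}f|$ is infinite. I will prove $\dim_H E \ge \alpha$ by the standard nested Cantor / mass-transference construction. Because the $X_{R_k}$ are uniformly $\alpha$-dimensional and the neighbourhoods are nested after a translation inside each cell (the torus homogeneity plus the matrix recurrence allow this), the mass distribution principle gives a measure $\mu$ on $E$ with $\mu(B(x,r)) \lesssim r^{\alpha-\epsilon}$. Sending $\epsilon \to 0$ along a sequence, or choosing the $R_k$ appropriately, gives $\dim_H E \ge \alpha$.

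\textbf{Main obstacle.} The hardest part will be Step 1: keeping the peak set both uniformly $\alpha$-dimensional at all scales and compatible with the abelian-variety lattice structure while retaining the gain $R^{\gamma_d(\alpha)}$. My first attempt will be to take $X$ as the preimage, under the measure-preserving endomorphism $g$, of a product of a full-dimensional lattice of peaks in some coordinates with a single point in the others, and to check the Frostman condition directly.
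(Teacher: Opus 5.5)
\textbf{Gap 1: Step 1 is not carried out, and your proposed mechanism runs the wrong way.} Your architecture matches the paper's: a fractal version of Theorem~\ref{thm:frequencies}, a Cantor set built from peak sets, and a divergence argument. But Step~1, which you call the main obstacle, is only planned, never done. If $\Sigma\subseteq\{-R,\dots,R\}^d$ is $\rho$-separated, then trivially $\sup_t|\sum_{\xi\in\Sigma}e(x\cdot\xi+t|\xi|^2)|\le|\Sigma|\lesssim(R/\rho)^d$. So the normalized gain is at most $(R/\rho)^{d/2}$, and reaching $R^{\gamma_d(\alpha)}$ forces $\rho\lesssim R^{\alpha/(d+2)}$. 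As $\alpha$ decreases, $\Sigma$ must therefore become \emph{denser} relative to $R$, not ``coarser, longer-separated''. Sparsifying the peak set, or replacing it by a product with a point, cannot by itself raise the amplitude.

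The missing idea is that the CM construction needs no change beyond decoupling the frequency cutoff from $N$. Keep $N$, $\Sigma=\gamma+\bar\varpi^k\Lambda\cap B_R^d$ and $X=M_{\varpi^k}^{-1}\{z_1,\dots,z_N\}$ exactly as in Section~2, but take $R\asymp N^{(d+2)/(2\alpha)}$ instead of $N^{1/2+1/d}$. Then $A$ may have $\|a\|\lesssim RN^{-1/2}$, and the phases stay coherent on the smaller neighbourhood $\Nc_{cR^{-1}}(X)$. This gives $|X|\asymp N^{(d+2)/2}\asymp R^{\alpha}$, separation $N^{-1/2-1/d}\asymp R^{-\alpha/d}$, and $|\Sigma|\asymp R^dN^{-d/2}\asymp R^{2\gamma_d(\alpha)}$. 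The gain comes from packing more frequencies onto the same $N$ time-peaks; this is Proposition~\ref{prop:frequenciesfractal}.

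\textbf{Gap 2: the tail control in Step 2 is unjustified.} You discard the terms $j>k$ because their ``$L^2$ contribution is small''. Lebesgue-$L^2$ smallness says nothing on the Lebesgue-null set $E$. You also cannot invoke a maximal estimate in $L^2(\mu)$ at regularity $s$, since its failure is the content of the theorem. A correct version needs three things:
\begin{itemize}
\item reduce to $\frac{d-\alpha}{2}<s<\gamma_d(\alpha)$, which is a nonempty range since $\gamma_d(\alpha)-\frac{d-\alpha}{2}=\frac{\alpha}{d+2}$;
\item use the fixed-time trace bound $\|e^{it\Delta}g\|_{L^2(\mu)}\lesssim\|g\|_{H^{s'}}$, valid for an $\alpha$-Frostman $\mu$ and some $s'\in(\frac{d-\alpha}{2},s)$;
\item take a union bound over the $\asymp N_k$ possible peak times $t_x$. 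This is affordable because $\|f_j\|_{H^{s'}}$ can be made arbitrarily small by taking $R_j$ large, since $s'<s$.
\end{itemize}

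\textbf{How the paper avoids this.} It sidesteps cross terms entirely. It uses parabolically rescaled pieces $\phi_n(x)=R_n^{-2\gamma_d(\alpha)}P_n(q_nx)$ with $q_n\approx Cr_{n-1}^{-1}$. Then $\phi_n\to0$ in $H^s$, while $\sup_{0<t\le q_n^{-2}}|e^{it\Delta}\phi_n|\ge1$ on all of $K=\mathrm{supp}\,\mu$. The Banach principle together with \cite[Proposition A.1]{Eceizabarrena2022} then gives $f\in H^s$ diverging on a set of positive $\mu$-measure, which has dimension $\ge\alpha$.

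The rescaling by $q_n$ does double duty, and you mention it only in passing. First, it pushes all peak times into $(0,q_n^{-2}]$. Second, it makes the new peak set $q_n^{-1}$-periodic, so every ball of the previous generation receives $\asymp q_n^dR_n^{\alpha}r_{n-1}^d$ peaks. The resulting ratio $m_nr_n^\alpha/r_{n-1}^\alpha\asymp C^{d-\alpha}$ yields the exact exponent $\alpha$. This is the concrete content behind your ``nested after a translation inside each cell''.
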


\textbf{A brief history of the problem}. Corollary \ref{cor:sobolevregularity} settles Carleson's problem on the critical Sobolev exponent for almost everywhere pointwise convergence of the periodic Schr\"odinger equation for all dimensions $d \ge 2$ up to the endpoint, in view of the positive results of Compaan-Luc\`a-Staffilani \cite[Proposition 3.1]{CompaanLucaStaffilani2021}, who in turn utilized Strichartz estimates established by Bourgain-Demeter \cite{BourgainDemeter2015}. Prior work on the maximal Weyl sum estimates, alongside the corresponding Euclidean problem, suggested that Sobolev regularity $s > \frac{d}{2(d+1)}$ was sufficient (See Moyua-Vega \cite{MoyVeg}, Barron \cite{Barron2022}, Baker \cite{Bak}, and Demeter \cite{Demeter2024} for $\T^1$, Miao-Yuan-Zhao-Barron \cite{MiaYuaZhaBar} and  Eceizabarrena-Yu \cite{EceYu} for $\T^d$, and Fenves-Tan \cite{FenTan} for perturbative results), whereas Corollary \ref{cor:sobolevregularity} shows that the conjectured behavior does not hold for $d \ge 2$. The case $d=1$ remains open; for further discussion, see Section \ref{sec:dim1}. This is in striking contrast to the Euclidean problem, where the case $d = 1$ was resolved early (including the endpoint) by Carleson \cite{Carleson1980} and Dahlberg-Kenig \cite{DahKen}, who showed $s \ge \frac{1}{4}$ is both necessary and sufficient for $\R^1$. Long after the resolution of the $d= 1$ case, Bourgain \cite{Bourgain2016} proved that $s \ge \frac{d}{2(d+1)}$ was necessary. Sufficiency for $s > \frac{d}{2(d+1)}$ was subsequently shown by Du-Guth-Li \cite{DuGuthLi2017} and Du-Zhang \cite{DuZhang2019}; \cite{DuGuthLi2017} for $d=2$ through an $L^3$-Schr\"odinger maximal estimate, and \cite{DuZhang2019} for all higher dimensions $d \geq 3$ via a fractal $L^2$-restriction argument. For prior work on divergence set estimates in the Euclidean setting, we direct the reader toward Lucà and Rogers \cite{LucaRogers2019}, and Du, Kim, Wang, and Zhang \cite{DuKimWangZhang2020}, and references therein. 

\textbf{Notation}. We use the standard notation $e(s) = e^{2\pi i s}$, and normalize the Schr\"odinger operator by
\begin{equation*}
    e^{it\Delta}f(x) = \sum_{\xi \in \Z^d} \hat{f}(\xi)e(x \cdot \xi + t|\xi|^2) \quad \mathrm{for} \quad f(x) = \sum_{\xi \in \Z^d} \hat{f}(\xi)e(x \cdot \xi).
\end{equation*}
Given a set $X \subseteq \T^d$ and $r > 0$, we let $\Nc_r(X) = \{x \in \T^d : \dist(x,X) \le r\}$ for the \textit{$r$-neighborhood} of $X$. The notation $A \lesssim B$ or $A = O(B)$ means $A \le CB$ for some constant $C > 0$, and $A \asymp B$ if $A \lesssim B$ and $B \lesssim A$. The notation $A \ll B$ means $A \le cB$ for some $c > 0$ which is assumed to be sufficiently small. Throughout, quantities will depend on a scale $R \ge 1$; the notation $A \lessapprox B$ means $A \lesssim_\epsilon R^\epsilon B$ for all $\epsilon > 0$. For auxiliary parameter(s) $par$, we use subscripts in asymptotic notation to denote dependency; for example, $A \lesssim_{par} B$ if $A \le C_{par} B$ for some $C_{par} > 0$ depending on $par$. Our implicit constants are allowed to implicitly depend on the dimension $d$, but are required to be independent of $R$.

\textbf{Acknowledgments}. The first author was supported by NSF RTG grant No. DMS-2230900. The second author was supported by the National Science Foundation under grant No. DMS-2037851. 

After a first draft of this manuscript was completed, an alternative proof of Theorem \ref{thm:schrodingerlowerbd} was independently obtained by Cen-Gao-Zhang \cite{CenGaoZha} using affine congruence classes and counting estimates in finite abelian groups. Their argument more closely parallels the initial disproof generated during the authors' September 17th, 2026 conversation with ChatGPT. For a detailed AI disclosure, see Section \ref{sec:AI}.

\section{Proof of Theorem \ref{thm:frequencies}}

In this section, we prove Theorem \ref{thm:frequencies}, and as a consequence Theorem \ref{thm:schrodingerlowerbd} and Corollary \ref{cor:sobolevregularity}. In the even-dimensional case, we identify $\T^{2g} = \C^g/\Lambda$ with a CM abelian variety; under this identification, the corresponding set of frequencies $\Sigma$ is a well-chosen coset of a subgroup $\Bar{\varpi}^k\Lambda$ of $\Lambda$ coming from the $\Z[i]$-action. In the odd-dimensional case, we introduce a twist by a character in the remaining variable.

\subsection{Dimension two}

To start, we consider the case $d=2$, where most of the key ideas are already present. The reason to restrict to the case $d=2$ is that $\T^2 = \C/\Z[i]$ is a CM elliptic curve. We will view the Gaussian integers simultaneously as a lattice in $\C$ and as an algebra of endomorphisms; to emphasize this distinction, we write $\Lambda$ when referring to $\Z[i]$ as a lattice. Given $a \in \Z[i]$, we write $M_a \in \End(\Lambda)$ for the endomorphism given by the multiplication action $x \mapsto ax$.

Recall that an odd rational prime $p \in \Z$ splits in $\Z[i]$ into $p = \varpi \Bar{\varpi}$ for some Gaussian prime $\varpi \in \Z[i]$ if and only if $p \equiv 1 \mod{4}$. For $d=2$, we may specialize to the case $p = 5$, $\varpi = 2+i$, $\Bar{\varpi} = 2-i$. Choose some $k \in \N$ and let $N = 5^{k}$. Note that we have a factorization $M_N = M_{\varpi^{k}}M_{\Bar{\varpi}^{k}}$, with $\det M_{\varpi^{k}} = \det M_{\Bar{\varpi}^{k}} = N$. Moreover $M_N$ is the zero endomorphism on the quotient $\Lambda/N\Lambda$. We have an exact sequence
\begin{equation}\label{eq:exactsequence}
    \begin{tikzcd}
        0 \ar[r] & \Bar{\varpi}^{k}\Lambda/N\Lambda \ar[r] & \Lambda/N\Lambda \ar[r, "{M_{\varpi^{k}}}"] & \varpi^{k}\Lambda/N\Lambda \ar[r] & 0
    \end{tikzcd}
\end{equation}
The subgroup $G = \im [M_{\varpi^{k}}]$ is a cyclic subgroup of order $N$ by unique factorization in $\Z[i]$.

Define a norm $\norm{\cdot}$ on $\Lambda$ given by the standard absolute value. We can then define an associated periodic metric $\norm{\cdot}_{N}$ by
\begin{equation*}
    \norm{\alpha}_{N} = \min_{\beta \in N\Lambda}\norm{\alpha-\beta}.
\end{equation*}

\begin{lemma}\label{lemma:seplemmad=2}
    Suppose $\alpha \in G$ is nonzero. Then $\norm{\alpha}_{N} \ge N^{1/2}$.
\end{lemma}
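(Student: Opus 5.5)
The plan is to lift to $\Lambda$ and use the multiplicativity of the norm $|\cdot|^2$ on $\Z[i]$ together with unique factorization. An element $\alpha \in G = \varpi^{k}\Lambda/N\Lambda$ is the class of some $\varpi^k \beta$ with $\beta \in \Lambda$. The quantity $\norm{\alpha}_N$ is the minimum of $|\gamma|$ over all lifts $\gamma \in \Lambda$ of this class. So we fix an arbitrary lift $\gamma = \varpi^k\beta - N\delta$, with $\delta \in \Lambda$, and we aim to show $|\gamma| \ge N^{1/2}$ whenever $\gamma \notin N\Lambda$, that is, whenever the class is nonzero.

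The first step is that every lift $\gamma$ is divisible by $\varpi^k$. Indeed $N = \varpi^k\bar\varpi^k$, so $\gamma = \varpi^k(\beta - \bar\varpi^k\delta) \in \varpi^k\Lambda$. Since the class is nonzero, $\gamma \neq 0$. Writing $\gamma = \varpi^k\eta$ with $\eta \in \Z[i]\setminus\{0\}$, multiplicativity gives
\[
|\gamma|^2 = |\varpi|^{2k}|\eta|^2 = 5^k|\eta|^2 \ge 5^k = N,
\]
because a nonzero Gaussian integer has $|\eta|^2 \in \Z_{\ge 1}$. Taking square roots gives $|\gamma| \ge N^{1/2}$. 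Minimizing over all lifts then yields $\norm{\alpha}_N \ge N^{1/2}$.

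There is essentially no obstacle here. The only point to check carefully is that the minimum in the definition of $\norm{\cdot}_N$ ranges over elements that all still lie in $\varpi^k\Lambda$. This holds because $N\Lambda \subseteq \varpi^k\Lambda$, since $\varpi^k \mid N$. It is also worth noting that we do not even need $\varpi$ and $\bar\varpi$ to be non-associate for this bound; unique factorization is only needed for the cyclicity of $G$ stated earlier, which this lemma does not use.
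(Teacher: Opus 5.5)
Your proof is correct and is essentially the paper's argument. Every lift has the form $\varpi^k(\beta-\bar\varpi^k\delta)$ with a nonzero Gaussian-integer cofactor, whose absolute value is therefore at least $1$, giving $\norm{\alpha}_N \ge |\varpi^k| = N^{1/2}$. Your side remark that this bound does not use cyclicity of $G$ or non-associativity of $\varpi,\bar\varpi$ is also accurate.
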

\begin{proof}
    Write $\alpha = j\varpi^{k}$. Then we have
    \begin{equation*}
        \norm{\alpha}_{N} = \min_{\delta \in \Lambda} |\varpi^{k}(j- \Bar{\varpi}^{k}\delta)| = N^{1/2} \min_{\delta \in \Lambda}|j - \Bar{\varpi}^{k}\delta|.
    \end{equation*}
    Note that $j - \Bar{\varpi}^{k}\delta \in \Lambda$ is a nonzero Gaussian integer, and hence $|j - \Bar{\varpi}^{k}\delta| \ge 1$.
\end{proof}

\begin{proof}[Proof of Theorem \ref{thm:schrodingerlowerbd} for $d=2$]
    Since $N^{-1/2}M_{\varpi^{k}} \in \Unitary(1)$ is an isometry for the norm $\norm{\cdot}$, we can always find a set $A \subseteq \Lambda$ so that $\Bar{\varpi}^{k}A$ projects to the full subgroup $\Bar{\varpi}^{k}\Lambda/N\Lambda$ modulo $N$, and with
    \begin{equation}\label{eq:Asizebd}
        \max_{a \in A} \norm{a} < 2N^{1/2}.
    \end{equation}
    We then set $\Sigma = \varpi^k + \Bar{\varpi}^kA$.

    Note that we have the identity
    \begin{equation*}
        |\varpi^{k} + \Bar{\varpi}^{k}a|^2 = N(1+|a|^2) + 2\Re(\Bar{\varpi}^{2k}a).
    \end{equation*}
    Now, let us define $u \colon \C/\Lambda \times [0,1] \to \C$ by
    \begin{equation*}
        u(x,t) = \sum_{\xi \in \Sigma} e\bigl( x \cdot \xi + t|\xi|^2 \bigr),
    \end{equation*}
    where $x \cdot \xi = x_1\xi_1 + x_2\xi_2 = \Re(\Bar{x}\xi)$ for $\xi = \xi_1 + i\xi_2$. In particular, for each $0 \le j < N$, we have
    \begin{align*}
        u(x,jN^{-1}) &= e\bigl( x \cdot \varpi^{k} \bigr)\sum_{a \in A} e\bigl(x \cdot \Bar{\varpi}^{k}a + 2j\Re(\Bar{\varpi}^{2k}a)/N) \\
        &= e(x \cdot \varpi^{k} )\sum_{a \in A} e\bigl( a \cdot \bigl[ \varpi^{k}x + 2\varpi^{2k} j/N \bigr] \bigr).
    \end{align*}
    Since $N$ is odd and $\varpi^{k}$ is a generator of $G$, so is $\varpi^{2k}$. Moreover, since $2$ is invertible in $G$, the points $z_j = -2\varpi^{2k} j/N$ are distinct for $1 \le j \le N$. By Lemma \ref{lemma:seplemmad=2}, we have $|z_j - z_{j'}| \ge N^{-1/2}$ for each $j \ne j'$.

    Let $X = M_{\varpi^k}^{-1}\{z_1,...,z_N\}$; then $X$ is $N^{-1}$-separated. Moreover, if $x \in \Nc_{cN^{-1}}(X)$, then there exists some $w \in \C$ with $|w| \lesssim N^{-1/2}$ for which $\varpi^{k}x - z_j \equiv w \mod{\Lambda}$ for some $j$. Since $|a \cdot w| \le |a||w| \lesssim 1$ by \eqref{eq:Asizebd}, the phase $e\left( a \cdot[\varpi^{k}x + 2\varpi^{2k} j/N] \right) = e\left( a \cdot w \right)$ has definite real part (perhaps up to shrinking the neighborhoods by an $O_d(1)$-factor), for each $a \in A$. Thus, we obtain
    \begin{equation*}
        \norm{\sup_{0 \le t \le 1} |u(x,t)|}_{L_x^p(\C/\Lambda)} \gtrsim_p |A|.
    \end{equation*}
    Since $|A| = |\Sigma| = N$, we thus have
    \begin{equation*}
        \norm{\sup_{0 \le t \le 1} |u(x,t)|}_{L_x^q(\C/\Lambda)} \gtrsim_p N,
    \end{equation*}
    and taking $R = N$, we are done.
\end{proof}

\subsection{Even dimensions}

In this section, we extend Theorem \ref{thm:schrodingerlowerbd} to hold for all even $d \ge 2$.

To start, choose a Galois extension $K/\Q(i)$ of degree $g$, and let $p \in \Z$ be an odd rational prime which splits completely in $K$. Let $p = \varpi \Bar{\varpi}$ with $\varpi,\Bar{\varpi} \in \Z[i]$ prime. Choose a basis $\vbf_1,...,\vbf_g \in \Os_K$ realizing $\Lambda = \Os_K$ as a free rank-$g$ $\Z[i]$-module, and endow $\Lambda$ with the induced complex inner product
\begin{equation}
    \inner{\sum_j a_j\vbf_j}{\sum_j b_j\vbf_j} = \sum_j a_j\Bar{b}_j.
\end{equation}
We let $\norm{a} = \sqrt{\inner{a}{a}}$ be the corresponding norm. Note that $\Z[i]$ once again acts by similitudes on $\Lambda$ with respect to this metric. We will use the following observation.

\begin{lemma}
    For all $\alpha \in K$, we have
    \begin{equation}\label{eq:fieldnormtonorm}
        |\Norm_\Q^K(\alpha)| \lesssim_K \norm{\alpha}^{2g}.
    \end{equation}
\end{lemma}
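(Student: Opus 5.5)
The plan is to write the field norm as a product of Galois conjugates and bound each conjugate linearly in the coordinates of $\alpha$. Since $K$ is a number field of absolute degree $[K:\Q] = 2g$ (degree $g$ over $\Q(i)$, which has degree $2$ over $\Q$), we have
\begin{equation*}
    \Norm_\Q^K(\alpha) = \prod_{\sigma \colon K \hookrightarrow \C} \sigma(\alpha),
\end{equation*}
a product over the $2g$ complex embeddings of $K$. Thus it suffices to show $|\sigma(\alpha)| \lesssim_K \norm{\alpha}$ for each embedding $\sigma$, and multiply the $2g$ bounds.

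To bound a single conjugate, write $\alpha = \sum_j a_j \vbf_j$ with $a_j \in \Q(i)$. Every $\alpha \in K$ has such an expression, because $\vbf_1,\dots,\vbf_g$ is a $\Z[i]$-basis of $\Os_K$ and hence a $\Q(i)$-basis of $K$. I interpret $\norm{\alpha}^2 = \sum_j |a_j|^2$, extending the inner product $\Q(i)$-sesquilinearly, where $|a_j|$ is the absolute value of $a_j \in \Q(i) \subset \C$ under the fixed embedding. Each embedding $\sigma$ restricts on $\Q(i)$ to either the identity or complex conjugation, so $|\sigma(a_j)| = |a_j|$ in both cases. Then the triangle inequality and Cauchy--Schwarz give
\begin{equation*}
    |\sigma(\alpha)| \le \sum_j |a_j|\,|\sigma(\vbf_j)| \le \Bigl(\max_{j,\sigma} |\sigma(\vbf_j)|\Bigr)\sqrt{g}\,\norm{\alpha}.
\end{equation*}
The constant $C_K = \sqrt{g}\max_{j,\sigma}|\sigma(\vbf_j)|$ depends only on $K$ and the fixed basis. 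Taking the product over all $2g$ embeddings yields $|\Norm_\Q^K(\alpha)| \le C_K^{2g}\norm{\alpha}^{2g}$, which is \eqref{eq:fieldnormtonorm}.

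There is no real obstacle here. The only point needing care is the one just used: the restriction of every embedding to $\Q(i)$ preserves absolute values. This is exactly why the bound is stated relative to the $\Q(i)$-coordinate norm rather than some arbitrary norm on $K$. The lemma will be used in the same way as in Lemma \ref{lemma:seplemmad=2}. For nonzero $\alpha \in \Os_K$ the norm is a nonzero integer, so $\norm{\alpha} \gtrsim_K 1$; this lower bound is what will give the separation estimates in higher even dimensions.
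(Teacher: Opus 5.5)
Your proof is correct and is the paper's argument: write $|\Norm_\Q^K(\alpha)|$ as the product of $|\sigma(\alpha)|$ over the $2g$ embeddings $\sigma\colon K\hookrightarrow\C$ and bound each factor by $\lesssim_K \norm{\alpha}$. You make explicit what the paper leaves implicit, namely that every $\sigma$ restricts to $\Q(i)$ isometrically, together with the triangle inequality and Cauchy--Schwarz in the $\vbf_j$-coordinates. One small correction to your closing aside: Lemma \ref{lemma:beta} does not use the bound $\norm{\alpha}\gtrsim_K 1$; it combines \eqref{eq:fieldnormtonorm} with the ideal-index bound $|\Norm_\Q^K(\Tilde{\alpha})| \ge [\Os_K:\afrak] = N^{d-1}$ to obtain $\norm{\Tilde{\alpha}} \gtrsim_K N^{1-1/d}$.
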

\begin{proof}
    Note that $|\Norm_\Q^K(\alpha)|$ may be expressed as the product of absolute values of $\alpha$ under the $2g$ embeddings $K \hookrightarrow \C$, and each such embedding yields an absolute value dominated by $\norm{\alpha}$.
\end{proof}

Now, fix $k \in \N$, and let $N = p^{k}$. Then we have a decomposition
\begin{equation*}
    \Lambda/N\Lambda \cong \prod_{\pfrak \mid p} \Lambda/\pfrak^{k} \cong (\Zmod{N})^{2g}.
\end{equation*}
Let $\norm{\cdot}_N$ be the periodic metric on $\Lambda/N\Lambda$ induced by $\norm{\cdot}$ on $\Lambda$.

Considering the endomorphism $M_{\varpi^{k}}$ acting on $\Lambda/N\Lambda$, we have
\begin{equation}\label{eq:varpikerim}
    \ker[M_{\varpi^{k}}] \cong \prod_{\pfrak \mid \varpi} \Lambda/\pfrak^k \quad \mathrm{and} \quad \im[M_{\varpi^{k}}] \cong \prod_{\Bar{\pfrak} \mid \Bar{\varpi}} \Lambda/\Bar{\pfrak}^k.
\end{equation}

\begin{lemma}\label{lemma:beta}
    There exists $\beta \in \im [M_{\varpi^k}]$ of order $N$, so that for each $\alpha \in \gen{\beta}$ nonzero, we have
    \begin{equation}\label{eq:periodicnormlowerbd}
        \norm{\alpha}_N \gtrsim_K N^{1-1/d}.
    \end{equation}
\end{lemma}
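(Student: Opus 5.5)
The plan is to realize $\gen{\beta}$ as the image of an ideal of $\Os_K$ in $\Lambda/N\Lambda$, chosen so that every lift of a nonzero element lies in an ideal of norm about $N^{d-1}$. The field-norm inequality \eqref{eq:fieldnormtonorm} then converts this divisibility into the lower bound on $\norm{\cdot}$. Throughout $d = 2g$ and $[K:\Q] = 2g$.

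\textbf{Choice of ideal.} Since $p$ splits completely in $K$, we have $p\Os_K = \prod_{\pfrak \mid \varpi}\pfrak \prod_{\Bar{\pfrak}\mid\Bar{\varpi}}\Bar{\pfrak}$. This is a product of $2g$ distinct primes, $g$ above each of $\varpi$ and $\Bar{\varpi}$, and each has residue field $\F_p$. Fix one prime $\qfrak_0 \mid \Bar{\varpi}$ and set
\begin{equation*}
    J = \prod_{\qfrak \mid p,\ \qfrak \neq \qfrak_0} \qfrak^k .
\end{equation*}
Then $N\Lambda = J\qfrak_0^k \subseteq J$. Also $J \subseteq \varpi^k\Lambda = \prod_{\pfrak\mid\varpi}\pfrak^k$, because every prime above $\varpi$ appears in $J$ to the power $k$. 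Hence $J/N\Lambda$ is a subgroup of $\im[M_{\varpi^k}]$, using \eqref{eq:varpikerim}. By the Chinese remainder theorem, $J/J\qfrak_0^k \cong \Os_K/\qfrak_0^k$. This last quotient is cyclic of order $p^k = N$: the residue degree is one, so $\Os_K/\qfrak_0^k \cong \Z_p/p^k$ after completion. Let $\beta$ be a generator of $J/N\Lambda$. Then $\beta$ has order $N$ and $\gen{\beta} = J/N\Lambda$.

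\textbf{Norm bound.} Let $\alpha \in \gen{\beta}$ be nonzero. Then $\norm{\alpha}_N = \min \norm{\tilde\alpha}$, where $\tilde\alpha$ ranges over the lifts of $\alpha$ to $\Lambda$. Every such lift lies in $J$, since $N\Lambda \subseteq J$. Every lift is also nonzero, since $\alpha \ne 0$ in $\Lambda/N\Lambda$. Thus $J \mid \tilde\alpha\Os_K$, and so
\begin{equation*}
    |\Norm_\Q^K(\tilde\alpha)| \ge \Norm(J) = p^{k(2g-1)} = N^{d-1}.
\end{equation*}
Combining this with \eqref{eq:fieldnormtonorm} gives $N^{d-1} \lesssim_K \norm{\tilde\alpha}^{d}$, that is, $\norm{\tilde\alpha} \gtrsim_K N^{1-1/d}$ uniformly over all lifts. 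This is exactly \eqref{eq:periodicnormlowerbd}.

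\textbf{Main obstacle.} The only delicate step is the structural one: checking that $J/N\Lambda$ is genuinely cyclic of order $N$ and sits inside $\im[M_{\varpi^k}]$. Both facts rest on $p$ splitting completely. Complete splitting makes the $2g$ primes above $p$ distinct, so CRT applies, and makes each residue degree one, so $\Os_K/\qfrak_0^k$ is cyclic of order $p^k$ rather than a product of cyclic groups. Once this is verified, the norm estimate is immediate and needs no geometry-of-numbers input beyond \eqref{eq:fieldnormtonorm}.
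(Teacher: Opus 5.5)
Your proof is correct and is essentially the paper's argument: your ideal $J$ is exactly the paper's $\afrak = N\Bar{\qfrak}^{-k}\Os_K$, and your generator of $J/N\Lambda$ is, up to a unit, the paper's element that is $1$ in the $\Bar{\qfrak}$-component and $0$ elsewhere. The bound via $[\Os_K:\afrak] = N^{d-1}$ combined with \eqref{eq:fieldnormtonorm} is also identical, and you additionally verify two facts the paper leaves implicit: that $J/N\Lambda$ is cyclic of order $N$, and that it lies in the image of $M_{\varpi^k}$.
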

\begin{proof}
    Fix any $\Bar{\qfrak} \mid \Bar{\varpi}$, and choose $\beta$ whose image under the isomorphism of \eqref{eq:varpikerim} is 1 in the $\Bar{\qfrak}$-component, and zero otherwise. Given any $\alpha \in \gen{\beta}$, lift to some $\Tilde{\alpha} \in \Lambda$, which is necessarily nonzero. Then $\Tilde{\alpha}$ is a nonzero element of the ideal $\afrak = N\Bar{\qfrak}^{-k}\Os_K$, and thus
    \begin{equation}\label{eq:fieldnormtoidealnorm}
        |\Norm_\Q^K(\Tilde{\alpha})| \ge [\Os_K:\afrak] = N^{d-1}.
    \end{equation}
    Combining \eqref{eq:fieldnormtoidealnorm} with \eqref{eq:fieldnormtonorm} yields the desired result.
\end{proof}

\begin{proof}[Proof of Theorem \ref{thm:schrodingerlowerbd} for $d = 2g$]
    Once again, we have that $N^{-1/2}M_{\varpi^{k}} \in \Unitary(g)$ is an isometry for $\norm{\cdot}$. Thus, we may choose some $\gamma \in \Lambda$ with $\norm{\gamma} \lesssim N^{1/2}$ and $\varpi^k\gamma = \beta$ with $\beta$ as in \ref{lemma:beta}. Let us set $R = N^{1/2 + 1/d}$, and take
    \begin{equation*}
        \Sigma = \gamma + \Bar{\varpi}^k\Lambda \cap B_R^d,
    \end{equation*}
    where $B_R^d$ is taken with respect to the metric $\norm{\cdot}$. Note that $|\Sigma| \asymp N$. We may choose $A \subseteq \Lambda$ with $\max_{a \in A}\norm{a} \lesssim N^{1/d}$ so that $\Sigma = \gamma + \Bar{\varpi}^kA$. The identity
    \begin{equation*}
        \norm{\gamma + \Bar{\varpi}^ka}^2 = \norm{\gamma}^2 + 2\Re{\inner{\gamma}{\Bar{\varpi}^ka}} + \norm{\Bar{\varpi}^ka}^2 = N\norm{a}^2 + \norm{\gamma}^2 + 2\Re{\inner{\beta}{a}}
    \end{equation*}
    allows us to cancel all the relevant quadratic terms. Thus, if we define
    \begin{equation*}
        u(x,t) = \sum_{\xi \in \Sigma} e\bigl( \Re{\inner{x}{\xi}} + t\norm{\xi}^2 \bigr),
    \end{equation*}
    then for all $1 \le j < N$ we have
    \begin{align*}
        u(x,jN^{-1}) &= e\bigl( \Re{\inner{x}{\gamma}} + j\norm{\gamma}^2N^{-1} \bigr) \sum_{a \in A}e \bigl( \Re{\inner{x}{\Bar{\varpi}^ka}} + 2j \Re{\inner{\beta}{a}}/N \bigr) \\
        &= e\bigl( \Re{\inner{x}{\gamma}}+j\norm{\gamma}^2N^{-1} \bigr) \sum_{a \in A}e \bigl( \Re{\inner{a}{\varpi^kx + 2j\beta/N}} \bigr).
    \end{align*}
    If we set $z_j = -2j\beta/N$, then Lemma \ref{lemma:beta} guarantees that $\norm{z_j - z_{j'}} \gtrsim N^{-1/d}$ for distinct indices $1 \le j,j' \le N$. If we let $X = M_{\varpi^k}^{-1}\{z_1,...,z_N\}$, then $|X| \gtrsim N^{\frac{d+2}{2}} \asymp R^d$, and moreover for all $x \in \Nc_{c_dR^{-1}}(X)$ we see that $e( \Re{\inner{a}{\varpi^kx -z_j}} )$ has real part $\gtrsim 1$, so that
    \begin{equation*}
        \sum_{1 \le j \le N} \int_{\Nc_{c_dR^{-1}}(M_{\varpi^{k}}^{-1}\{z_j\})} |u(x,jN^{-1})|^q\dx \gtrsim |\Nc_{c_dR^{-1}}(X)||A|^q \gtrsim N^q = R^{q \frac{2d}{d+2}},
    \end{equation*}
    or
    \begin{equation*}
        \norm{\sup_{0 \le t \le 1} |u(x,t)|}_{L^q(\T^d)} \gtrsim_q R^{\frac{2d}{d+2}} = R^{\frac{d}{d+2}}|\Sigma|^{1/2},
    \end{equation*}
    as desired.
\end{proof}

\subsection{Odd dimensions}

In odd dimensions, we employ a parity trick to obtain slightly larger separation. Let $d = 2g+1$, and consider $\T^{2g+1} = \C^g/\Lambda \times \T^1$. Our goal is to construct the frequency set $\Sigma$ as a graph over the $2g$-dimensional one; the additional $\T^1$-factor will allow us to produce some additional separation in the frequencies.

Let us fix $K,\Lambda,p,\varpi,\beta$ as in the even-dimensional case, and assume that $k$ is divisible by $2d$ (so that all quantities involved are integers). We extend the norm $\norm{\cdot}$ to $\Lambda \times \Z$ via $\norm{(\alpha,j)}^2 = \norm{\alpha}^2 + j^2$. Consider the submodule $N \Lambda \times N \Z$, and let $\norm{\cdot}_N$ be the associated periodic metric on $\Lambda/N\Lambda \times \Zmod{N}$. Let $G = \gen{\beta}$, and let $\chi \in \hat{G}$ be the character $\chi(j\beta) = jN^{1-1/d} \mod{N}$. We then set
\begin{equation*}
    H = \left\{ (j\beta, \chi(j\beta)) \in \Lambda/N\Lambda \times \Zmod{N} : j \in \Zmod{N} \right\}.
\end{equation*}

\begin{lemma}
    For all $1 \le j < N$, we have
    \begin{equation*}
        \norm{(j\beta,\chi(j\beta))}_N \gtrsim N^{1-1/d}.
    \end{equation*}
\end{lemma}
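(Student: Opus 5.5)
The plan is to reduce the statement to Lemma \ref{lemma:beta}, applied in the even dimension $2g$, by projecting onto the $\Lambda/N\Lambda$-factor. First I would check that the periodic metric on the product dominates the periodic metric on the first factor. Since the period lattice is the product $N\Lambda \times N\Z$ and $\norm{(\alpha,c)}^2 = \norm{\alpha}^2 + c^2$, for any lifts we have
\begin{equation*}
    \norm{(\alpha,c)}_N = \min_{(\delta,n) \in N\Lambda \times N\Z} \bigl( \norm{\alpha - \delta}^2 + (c-n)^2 \bigr)^{1/2} \ge \min_{\delta \in N\Lambda} \norm{\alpha-\delta} = \norm{\alpha}_N .
\end{equation*}
It therefore suffices to bound $\norm{j\beta}_N$ from below for $1 \le j < N$.

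Next, the element $j\beta$ is a nonzero element of $\gen{\beta}$ because $\beta$ has order $N$, so Lemma \ref{lemma:beta} applies. We must be careful with what ``$d$'' means there. In the even-dimensional setting, Lemma \ref{lemma:beta} was proved for $\Lambda = \Os_K$ of real rank $2g$. Its proof gives $|\Norm_\Q^K(\Tilde\alpha)| \ge [\Os_K : N\Bar{\qfrak}^{-k}\Os_K] = N^{2g-1}$. Combined with $|\Norm_\Q^K(\Tilde\alpha)| \lesssim_K \norm{\Tilde\alpha}^{2g}$ from \eqref{eq:fieldnormtonorm}, this yields $\norm{j\beta}_N \gtrsim_K N^{1-\frac{1}{2g}}$.

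For a lift of $j\beta$ with $p \mid j$, the lift lies in a smaller ideal, so the index bound only improves. Even without this observation, the lemma as stated covers all nonzero elements of $\gen{\beta}$. Finally, since $d = 2g+1 > 2g$, we have $N^{1-\frac{1}{2g}} \ge N^{1-\frac1d}$, which gives the claim with implicit constant depending only on $K$, hence only on $d$ once $K$ is fixed.

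The only step I expect to need care is this bookkeeping of the dimension parameter: the even-dimensional lemma is stated with exponent $1-1/d$ where $d$ is the even dimension $2g$, not the current odd $d$. I would restate it explicitly as $\norm{\alpha}_N \gtrsim_K N^{1-1/(2g)}$ before invoking it.

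The character coordinate $\chi(j\beta) = jN^{1-1/d} \bmod N$ plays no role in this lower bound; it becomes relevant only later, when the phases are arranged to cancel. If one wanted a lower bound that uses the twist, one would split into $j < N^{1/d}$ and $j \ge N^{1/d}$. In the first range, $jN^{1-1/d} < N$ and $\min(jN^{1-1/d}, N - jN^{1-1/d}) \ge N^{1-1/d}$ only for suitable $j$. Since the first-coordinate bound already suffices for every $j$, I would not pursue this.
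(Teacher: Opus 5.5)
Your first step, that the product periodic metric dominates the one on $\Lambda/N\Lambda$, is fine. The argument breaks at the last step, because the inequality $N^{1-\frac{1}{2g}} \ge N^{1-\frac1d}$ is backwards. Since $d=2g+1>2g$, we have $\frac{1}{2g}>\frac1d$, hence $N^{1-\frac{1}{2g}} < N^{1-\frac1d}$. For $d=3$ your bound is $N^{1/2}$, but $N^{2/3}$ is needed.

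No sharper estimate on the first coordinate alone can repair this. The ideal $\afrak = N\Bar{\qfrak}^{-k}\Os_K$ has covolume $N^{2g-1}$ in a $2g$-dimensional space, so by Minkowski it contains a nonzero $v$ with $\norm{v}\lesssim N^{1-\frac{1}{2g}}$. For large $N$ such a $v$ is not in $N\Lambda$, whose nonzero elements have norm $\ge N$. Since $\afrak/N\Lambda\cong \Os_K/\Bar{\qfrak}^k$ is cyclic and generated by $\beta$, $v$ represents some $j\beta$ with $1\le j<N$. Thus $\min_{1\le j<N}\norm{j\beta}_N \asymp N^{1-\frac{1}{2g}}$. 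The character coordinate, which you set aside as playing no role, is exactly what produces the extra separation in odd dimensions; this is the ``parity trick''.

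The paper's proof splits according to whether $N^{1/d}$ divides $j$, not according to the size of $j$, because $\chi(j\beta)$ depends only on $j$ modulo $N^{1/d}$.

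\textbf{Case $N^{1/d}\nmid j$.} Write $j\equiv r$ modulo $N^{1/d}$ with $1\le r<N^{1/d}$. Then $\chi(j\beta)\equiv rN^{1-1/d}\pmod N$ with $N^{1-1/d}\le rN^{1-1/d}\le N-N^{1-1/d}$. So the $\Zmod{N}$-coordinate alone already has periodic size $\ge N^{1-1/d}$.

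\textbf{Case $N^{1/d}\mid j$.} The $\Bar{\qfrak}$-component of $j\beta$ is divisible by $p^{k/d}$. So every lift of $j\beta$ is a nonzero element of $\bfrak = N\Bar{\qfrak}^{-k+k/d}\Os_K$, which has index $N^{2g-1+1/d}$. Then \eqref{eq:fieldnormtonorm} gives $\norm{j\beta}_N\gtrsim N^{(2g-1+1/d)/(2g)} = N^{1-1/d}$, using $2g=d-1$.

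Your remark that lifts with $p\mid j$ lie in a smaller ideal is the seed of the second case. It has to be quantified with the exact power $p^{k/d}$, and it only closes the argument once the remaining residues of $j$ are handled by the twist in the last coordinate.
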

\begin{proof}
    Note that, if $N^{1/d} \nmid j$, then the second coordinate is of size at least $N^{1-1/d}$, as desired. If $N^{1/d} \mid j$, then (in the notation of Lemma \ref{lemma:beta}), we note that $j\beta$ is a nonzero element in the ideal $\bfrak = N\Bar{\qfrak}^{-k+k/d} \Os_K$, so that
    \begin{equation*}
        |\Norm_\Q^K(j\beta)| \ge [\Os_K:\bfrak] = N^{d-2+1/d}.
    \end{equation*}
    This, alongside \eqref{eq:fieldnormtonorm}, yields the desired result.
\end{proof}

\begin{proof}[Proof of Theorem \ref{thm:schrodingerlowerbd} for $d=2g+1$]
    We consider $M_{(\varpi^{k},N^{1/2})}$ acting on $\C^g/\Lambda \times \T^1$. Once again, $N^{-1/2}M_{(\varpi^{k},N^{1/2})} \in \Ortho(d)$ is an isometry of $\norm{\cdot}$. Choose $\gamma \in \Lambda$ and $h \in \Zmod{N}$ so that $\norm{\gamma} \lesssim N^{1/2}$, $|h| \lesssim N^{1/2}$, and with $M_{(\varpi^k,N^{1/2})}(\gamma,h) = (\beta,\chi(\beta))$. Again set $R = N^{1/2 + 1/d}$, and take
    \begin{equation*}
        \Sigma = (\gamma,h) + \Bar{\varpi}^k\Lambda \times N^{1/2}\Z \cap B_R^d.
    \end{equation*}
    Then $|\Sigma| \asymp N$. From here, the argument is analogous to the even-dimensional case. Since $N \in p^{2d\N}$ is arbitrary, we are done.
\end{proof}

\begin{rmk}
    While it was convenient for us to work with powers of a  fixed totally splitting prime $p$, one may alternatively fix $k = 1$ (or, for the odd-dimensional case, $k=2d$), and let $p$ vary amongst all such totally split primes, which have relative density $\gtrsim_K 1$ by Chebotarev density.
\end{rmk}

\section{Divergence on fractals}

In this section, we prove Theorem \ref{thm:fractalregularity}. We make use of the following fractal analogue of Theorem \ref{thm:frequencies}, whose proof follows exactly as that of Theorem \ref{thm:frequencies}, with suitable choices of radii.

\begin{prop}\label{prop:frequenciesfractal}
    Fix $d \ge 2$, and for each $0 < \alpha \le d$, set $\gamma_d(\alpha) = \frac{d}{2(d+2)}(d+2-\alpha)$. Then for all $R \in \N$ there exist an $R^{\frac{\alpha}{d+2}}$-separated set $\Sigma \subseteq \{-R,...,R\}^d$ with $|\Sigma| \asymp_{\alpha} R^{2\gamma_d(\alpha)}$ and an $R^{-\alpha/d}$-separated subset $X \subseteq \T^d$ with $|X| \asymp_{\alpha} R^{\alpha}$ so that for all $x \in \Nc_{c_\alpha R^{-1}}(X)$ we have
    \begin{equation*}
        \sup_{0 \le t \le 1} \left| \sum_{\xi \in \Sigma} e \bigl( x \cdot \xi + t|\xi|^2 \bigr) \right| \gtrsim_{d,\alpha} R^{\gamma_d(\alpha)}|\Sigma|^{1/2}
    \end{equation*}
    Moreover, the relevant times $t \in [0,1]$ are evenly spaced at scale $R^{-\frac{2\alpha}{d+2}}$, and contribute equal mass.
\end{prop}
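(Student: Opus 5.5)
The plan is to rerun the construction in the proof of Theorem \ref{thm:schrodingerlowerbd}, keeping the prime $p$, the field $K$, the lattice $\Lambda = \Os_K$, the element $\beta$ of Lemma \ref{lemma:beta}, and the lift $\gamma$ with $\varpi^k\gamma = \beta$. The only change is that two scales are decoupled. These are the modulus $N = p^k$, which controls the number of times and the frequency separation, and a frequency radius $L$ for the set $A$. In the proof above these were tied together by $L = N^{1/d}$. Now we take
\begin{equation*}
    N \asymp R^{\frac{2\alpha}{d+2}}, \qquad L = R N^{-1/2} \asymp R^{\frac{d+2-\alpha}{d+2}}, \qquad \Sigma = \gamma + \Bar{\varpi}^k A, \quad A = \Lambda \cap B^d_{L}.
\end{equation*}
Since $R$ is an arbitrary integer and $N$ ranges over powers of $p$ (or over $p^{2d\N}$ in the odd case), we choose $k$ with $N$ within a factor $p^{2d}$ of the target. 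This only affects constants depending on $d$ and $\alpha$. For $\alpha = d$ we recover exactly the previous choices.

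\textbf{Bookkeeping.} Because $N^{-1/2}M_{\Bar\varpi^k}$ is an isometry, $\Sigma$ is $N^{1/2} = R^{\alpha/(d+2)}$-separated. It lies in a ball of radius $\lesssim N^{1/2}L = R$, so it sits inside $\{-R,\dots,R\}^d$ after rescaling by the bounded change of basis between $\norm{\cdot}$ and the standard coordinates. Its size is $|\Sigma| = |A| \asymp L^d = R^{2\gamma_d(\alpha)}$. The identity $\norm{\gamma + \Bar\varpi^k a}^2 = N\norm{a}^2 + \norm{\gamma}^2 + 2\Re\inner{\beta}{a}$ again shows that at the times $t_j = j/N$, which are evenly spaced at scale $N^{-1} = R^{-2\alpha/(d+2)}$, we have
\begin{equation*}
    |u(x,t_j)| = \Bigl|\sum_{a\in A} e\bigl(\Re\inner{a}{\varpi^k x - z_j}\bigr)\Bigr|, \qquad z_j = -2j\beta/N .
\end{equation*}
Hence $|u(x,t_j)| \gtrsim |A| = |\Sigma| = R^{\gamma_d(\alpha)}|\Sigma|^{1/2}$ whenever $\varpi^k x - z_j$ lies within $c/L$ of $\Lambda$. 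Pulling back by $M_{\varpi^k}$, which scales distances by $N^{1/2}$, this condition holds on a $cN^{-1/2}L^{-1} = cR^{-1}$ neighborhood of $X = M_{\varpi^k}^{-1}\{z_1,\dots,z_N\}$. The count is $|X| = N\cdot N^{d/2} = N^{(d+2)/2} \asymp R^{\alpha}$.

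\textbf{Separation of $X$ (the main step).} Lemma \ref{lemma:beta}, or in odd dimensions its character-twisted analogue, gives $\norm{z_j - z_{j'}}_{\T} \gtrsim N^{-1/d}$ for $j \ne j'$. This bound is what makes the windows disjoint. So the estimate we must check is that $N^{-1/d}$ dominates the window width $1/L$. In exponents this reads $\frac{2\alpha}{d(d+2)} \le \frac{d+2-\alpha}{d+2}$, i.e. $\alpha(1+2/d) \le d+2$, which holds for all $\alpha \le d$ with equality only at $\alpha = d$. Consequently the windows around distinct $z_j$ are disjoint, and each time $t_j$ contributes the same mass. The separation of $X$ comes from two sources:
\begin{itemize}
    \item preimages of a single $z_j$ form a coset of $M_{\varpi^k}^{-1}\Lambda$ and are therefore $N^{-1/2}$-separated;
    \item preimages of distinct $z_j$ are separated by $N^{-1/2}\cdot N^{-1/d} = N^{-(d+2)/(2d)} = R^{-\alpha/d}$.
\end{itemize}
This gives the claimed $R^{-\alpha/d}$-separation.

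In the odd case $d = 2g+1$ the argument is identical. We use $M_{(\varpi^k, N^{1/2})}$ together with the graph $H$, and replace $A$ by $(\Lambda \times \Z) \cap B^d_L$, with the exponents above unchanged. The only point requiring care is that $N$ must lie in $p^{2d\N}$, which again costs only constants.
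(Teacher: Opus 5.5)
Your proposal is correct and follows the paper's proof. Both rerun the construction of Theorem \ref{thm:frequencies} with the modulus $N \asymp R^{2\alpha/(d+2)}$ decoupled from the frequency radius $RN^{-1/2} \asymp R^{(d+2-\alpha)/(d+2)}$, set $X = M_{\varpi^k}^{-1}\{z_1,\dots,z_N\}$, and read off $|\Sigma| \asymp R^{2\gamma_d(\alpha)}$, $|X| \asymp N^{(d+2)/2} \asymp R^{\alpha}$, and the $N^{-1/2-1/d} \asymp R^{-\alpha/d}$ separation; your two-source separation argument and the window-disjointness check via $\alpha \le d$ spell out steps the paper leaves implicit.
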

\begin{proof}
    For sake of notation we consider the case $d=2g$ even; the odd case is exactly the same. In the proof of Theorem \ref{thm:frequencies}, take $R \asymp N^{\frac{d+2}{2\alpha}}$; then we have
    \begin{equation*}
        |\Sigma| \asymp R^dN^{-d/2} \asymp R^{2\gamma_d(\alpha)}.
    \end{equation*}
    We may choose a set $A \subseteq \Lambda$ with
    \begin{equation}\label{eq:Asizefractal}
        \max_{a \in A} \norm{a} \lesssim RN^{-1/2} \lesssim R^{\frac{d+2-\alpha}{d+2}}
    \end{equation}
    so that $\Sigma = \gamma + \Bar{\varpi}^kA$. 
    
    Again, choose $z_1,...,z_N \in \T^d$ as in the proof of Theorem \ref{thm:frequencies}, and take $X = M_{\varpi^k}^{-1}\{z_1,...,z_N\}$. Note that points in $X$ are separated by at least $N^{-1/2-1/d} \asymp R^{-\alpha/d}$, and $|X| \gtrsim N^{\frac{d+2}{2}} \asymp R^\alpha$. Moreover, for any $x \in \Nc_{c_dR^{-1}}(X)$, we have by \eqref{eq:Asizefractal} that 
    \begin{equation*}
        \sup_{0 \le t \le 1}\left| \sum_{\xi \in \Sigma} e(x \cdot \xi + t|\xi|^2) \right| \gtrsim_{d,\alpha} R^{2\gamma_d(\alpha)}.
    \end{equation*}
\end{proof}

Since the case $\alpha = d$ follows from Corollary \ref{cor:sobolevregularity}, we will now assume $0 < \alpha < d$; we assume that $\frac{d-\alpha}{2} < s < \gamma_d(\alpha)$, so that the problem is well-posed by \cite{Eceizabarrena2022}. 

Our goal is to construct an exceptional set $K \subseteq \T^d$ as a nested intersection of sets which are fractal at various scales using Proposition \ref{prop:frequenciesfractal} alongside parabolic rescaling. To begin the induction, set $q_0,R_0,\delta_0,m_0 = 1$, and let $K_0$ be an embedded ball in $\T^d$ of radius $r_0 \gtrsim 1$. We also fix some large constant $C \gg_\alpha 1$.

Now, assume we have chosen $q_{n-1},R_{n-1},r_{n-1},\delta_{n-1},m_{n-1},K_{n-1}$, so that $K_{n-1}$ is a union of $m_0 \cdots m_{n-1}$ balls of radius $r_{n-1}$, whose centers are $\delta_{n-1}$-separated. Set $q_{n} = \floor{Cr_{n-1}^{-1}}$, $R_{n} \gg R_{n-1}$, and
\begin{equation*}
    \Tilde{X}_{R_n} = \{x \in \T^d : q_nx \in X_{R_n}\},
\end{equation*}
where $X_{R_n}$ is obtained as in Proposition \ref{prop:frequenciesfractal}. We then set $r_{n} \asymp (q_{n}R_{n})^{-1}$ and $\delta_{n} \asymp (q_{n}R_{n}^{\alpha/d})^{-1}$. Then $\Tilde{X}_{R_n}$ consists of $\gtrsim q_n^dR_n^\alpha$ points which are $\delta_n$-separated. For each $x \in X_{R_n}$, the set $Z_x = \{z \in \T^d : q_nz = x\}$ is a grid of spacing $q_n^{-1}$; thus, we have the lower bound
\begin{equation}\label{eq:ballintersection}
    |B_{r_{n-1}/2}(y) \cap Z_x| \gtrsim q_n^{d}r_{n-1}^d
\end{equation}
independent of $y \in \T^d$, where the implicit constants are independent of $C$, so long as it is sufficiently large. In particular, if $B \subseteq K_{n-1}$ is an $r_{n-1}$-ball, then
\begin{equation*}
    \biggl|\frac{1}{2}B \cap \Tilde{X}_{R_n}\biggr| \gtrsim q_n^dR_n^\alpha r_{n-1}^d,
\end{equation*}
where $\frac{1}{2}B$ is the ball with the same center and half the radius. We then define $K_n \subseteq K_{n-1}$ choosing exactly $m_n$ balls of radius $r_n$ lying inside each of the parent balls of $K_{n-1}$. If we set $m_n \asymp q_n^dR_n^\alpha r_{n-1}^d$, then $K_n$ consists of $m_0 \cdots m_n$ balls of radius $r_n$, with centers $\delta_n$-separated. Finally, we set $K = \bigcap_n K_n$.

\begin{lemma}
    The set $K$ satisfies $\dim_H K \ge \alpha$.
\end{lemma}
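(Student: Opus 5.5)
We will prove $\dim_H K \ge \alpha$ with the mass distribution principle. Let $\mu$ be the natural probability measure on $K$: each of the $m_0 \cdots m_n$ balls of generation $n$ gets mass $(m_0\cdots m_n)^{-1}$, split equally among its $m_{n+1}$ children. Since the $K_n$ are nested compact sets and each generation-$n$ ball contains exactly $m_{n+1}$ children, this defines a Borel probability measure supported on $K$. It then suffices to show that for every $\epsilon>0$ there is $C_\epsilon$ with $\mu(B_\rho(y)) \le C_\epsilon \rho^{\alpha-\epsilon}$ for all $y$ and all small $\rho$.

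\textbf{Step 1: parameters.} Fix $\rho$ and choose $n$ with $r_n \le \rho < r_{n-1}$. The relevant quantities are
\begin{equation*}
    r_n \asymp (q_nR_n)^{-1}, \qquad \delta_n \asymp (q_nR_n^{\alpha/d})^{-1}, \qquad q_n \asymp r_{n-1}^{-1}, \qquad m_n \asymp q_n^dR_n^\alpha r_{n-1}^d \asymp R_n^\alpha .
\end{equation*}
Choosing $R_n \gg R_{n-1}$ growing fast enough (for instance $R_n \ge \exp(R_{n-1})$), the mass of one generation-$n$ ball is
\begin{equation*}
    (m_0\cdots m_n)^{-1} \asymp C^{O(n)} R_n^{-\alpha} \prod_{j<n} R_j^{-\alpha},
\end{equation*}
and the product over $j<n$ is negligible at scale $r_{n-1}^{-\epsilon}$.

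\textbf{Step 2: counting.} We count the generation-$n$ balls meeting $B_\rho(y)$, in three ranges of $\rho$.

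If $r_n\le\rho\le\delta_n$, since the centers are $\delta_n$-separated, $B_\rho(y)$ meets $O(1)$ such balls. Hence $\mu(B_\rho)\lesssim (m_0\cdots m_n)^{-1}$. Using $r_n^{\alpha}\asymp (q_nR_n)^{-\alpha}$ and $q_n \asymp r_{n-1}^{-1}$, this is comparable to $r_n^{\alpha}$ up to the factor $q_n^\alpha \prod_{j<n} m_j^{-1}$. That factor is bounded by $r_{n-1}^{-\alpha}(m_0\cdots m_{n-1})^{-1}$, which is at most $r_{n-1}^{-\epsilon}$ times something bounded. This follows inductively, since $(m_0\cdots m_{n-1})^{-1}\approx r_{n-1}^{\alpha}$ up to lower-order losses; the induction is part of what we verify.

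If $\delta_n\le\rho\le q_n^{-1}$, the points of $\tilde X_{R_n}$ inside one $q_n^{-1}$-cell form a rescaled copy of $X_{R_n}$, which is $R_n^{-\alpha/d}$-separated with $\asymp R_n^\alpha$ points. A ball of radius $\rho$ therefore contains at most $(\rho/\delta_n)^d$ of them, and also at most $m_n$. We interpolate using $(\rho/\delta_n)^d \le (\rho/\delta_n)^{\alpha}\cdot(\ldots)$. The condition we need is that the count is $\lesssim (\rho/r_n)^\alpha$ times a controlled loss. Since $(\delta_n/r_n)^\alpha = R_n^{\alpha(1-\alpha/d)}\ge 1$, the bound $\min\bigl((\rho/\delta_n)^d, R_n^\alpha\bigr)\le (\rho q_n)^\alpha R_n^\alpha = (\rho/r_n)^\alpha$ holds. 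Indeed, at $\rho=q_n^{-1}$ both sides equal $R_n^\alpha$, and below that the ratio $(\rho/\delta_n)^d/(\rho/r_n)^\alpha = \rho^{d-\alpha}\delta_n^{-d}r_n^{\alpha}$ is increasing in $\rho$, hence at most $1$.

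If $q_n^{-1}\le\rho\le r_{n-1}$, the ball meets $O(1)$ parent balls and at most $(\rho q_n)^d R_n^\alpha$ children. We compare this with the $m_n$ children per parent in the same way, giving $\mu(B_\rho)\lesssim \min\bigl((\rho/r_{n-1})^d,1\bigr)(m_0\cdots m_{n-1})^{-1}\le (\rho/r_{n-1})^\alpha\, r_{n-1}^{\alpha-\epsilon}$.

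\textbf{Step 3: conclusion.} Combining the three ranges gives $\mu(B_\rho(y))\lesssim_\epsilon \rho^{\alpha-\epsilon}$. The mass distribution principle then yields $\dim_H K\ge\alpha-\epsilon$ for every $\epsilon>0$, hence $\dim_H K \ge \alpha$.

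\textbf{Main obstacle.} The hardest step is the middle range of Step 2. There we need the distribution of $X_{R_n}$ at intermediate scales, not just its separation; separation alone gives the $d$-dimensional count $(\rho/\delta_n)^d$. The monotonicity computation above handles this because $\alpha\le d$. The remaining issue is the accumulated constants $C^{O(n)}$ and $\prod_{j<n} R_j$. These are absorbed by choosing $R_n$ superexponentially large relative to $R_{n-1}$, so that every loss is $r_{n-1}^{-O(\epsilon)}\le\rho^{-\epsilon}$.
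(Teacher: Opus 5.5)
Your argument is correct. It uses the same measure and the same separation-based counting as the paper. The real difference is how you control $w_n/r_n^\alpha$, where $w_n=(m_0\cdots m_n)^{-1}$.

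\textbf{Your route.} You let the per-generation constants accumulate to $A^n$. You then absorb them into $r_{n-1}^{-\epsilon}\le\rho^{-\epsilon}$, using that $r_{n-1}^{-1}$ grows superexponentially; it is at least $\prod_{j<n}R_j$ up to $O(1)^n$ factors.

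\textbf{The paper's route.} The paper avoids any loss by using the free constant $C$ and the strict inequality $\alpha<d$. Since $m_nr_n^\alpha/r_{n-1}^\alpha\asymp(q_nr_{n-1})^{d-\alpha}\asymp C^{d-\alpha}$ with constants independent of $C$, taking $C$ large gives $w_n=w_{n-1}/m_n\le w_{n-1}r_n^\alpha/r_{n-1}^\alpha$. Hence $w_n\lesssim r_n^\alpha$ uniformly in $n$. The paper then treats all $r_n<r\le r_{n-1}$ at once, using $m_n\asymp\delta_n^{-d}r_{n-1}^d$: $\mu(B_r)\lesssim w_n(1+(r/\delta_n)^d)\lesssim r_n^\alpha+w_{n-1}(r/r_{n-1})^d\lesssim r^\alpha$.

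\textbf{Your three ranges.} Your second and third ranges give literally the same bound, since $\delta_n^{-d}\asymp q_n^dR_n^\alpha$. The ``main obstacle'' you flag is not really one. Your own argument uses only separation, and that suffices precisely because $m_n\asymp(r_{n-1}/\delta_n)^d$.

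\textbf{What each approach gives.} The paper's route yields a genuine $\alpha$-Frostman measure, so $\mathcal{H}^\alpha(E)>0$ whenever $\mu(E)>0$, for any admissible sequence $R_n$. Yours yields $\mu(B_\rho)\lesssim_\epsilon\rho^{\alpha-\epsilon}$. That still gives $\dim_H E\ge\alpha$ for every $E$ with $\mu(E)>0$, which is what the proof of Theorem \ref{thm:fractalregularity} actually uses. Your extra hypothesis $R_n\ge\exp(R_{n-1})$ is unnecessary: geometric growth $R_n\gg R_{n-1}$ already makes $\prod_{j<n}R_j$ superexponential in $n$.

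\textbf{One wording fix.} The factor $\prod_{j<n}R_j^{-\alpha}$ is not a negligible loss. It is comparable to $r_{n-1}^\alpha$ up to $C^{O(n)}$ and must cancel against $q_n^\alpha\asymp r_{n-1}^{-\alpha}$, exactly as your first-range computation does. Only the $O(1)^n$ constants are absorbed by growth.
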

\begin{proof}
    Define a probability measure $\mu$ on $K$ assigning equal mass to each $r_n$-ball in $K_n$. Each level-$n$ ball has mass $w_n = (m_0 \cdots m_n)^{-1}$. We claim that we may choose $C$ sufficiently large so that
    \begin{equation}\label{eq:masstoradius}
        w_n \lesssim r_n^\alpha
    \end{equation}
    Indeed, we observe that
    \begin{equation}
        m_n \frac{r_n^\alpha}{r_{n-1}^\alpha} \asymp q_n^dR_n^\alpha r_n^\alpha r_{n-1}^{d-\alpha} \asymp (q_nr_{n-1})^{d-\alpha} \asymp C^{d-\alpha},
    \end{equation}
    with the implicit constant independent of the choice of $C \gg 1$. Since we assumed $\alpha < d$, it follows that for $C = C_0$ sufficiently large in relation to all other parameters, we have that
    \begin{equation*}
        r_n^\alpha > r_{n-1}^\alpha m_n^{-1},
    \end{equation*}
    after which \eqref{eq:masstoradius} follows by induction.

    Now, the lemma follows from the mass distribution principle. Indeed, $m_n \asymp \delta_n^{-d}r_{n-1}^d$ by definition. If we have $r_n < r \le r_{n-1}$, then since $K_n$ consists of $\delta_n$-separated balls, any $r$-ball in $\T^d$ intersects $K_n$ in $\lesssim 1 + (r/\delta_n)^d$ balls. Thus, for $x \in \T^d$ arbitrary we have
    \begin{equation*}
        \mu(B_r(x)) \lesssim w_n\left(1 + (r/\delta_n)^d \right) \lesssim r_n^\alpha + w_{n-1}\frac{r^d}{r_{n-1}^d} \lesssim r_n^\alpha + r^\alpha \left( \frac{r}{r_{n-1}} \right)^{d-\alpha} \lesssim r^\alpha.
    \end{equation*}
    In particular, this implies $\dim_H \mu \ge \alpha$, and since $\supp \mu = K$ it follows that $\dim_H K \ge \alpha$.
\end{proof}

We are now in place to prove Theorem \ref{thm:fractalregularity}.

\begin{proof}[Proof of Theorem \ref{thm:fractalregularity}]
    Fix some $s < \gamma_d(\alpha)$. For each $n$, let $P_n,\phi_n$ be the trigonometric polynomials produced by Proposition \ref{prop:frequenciesfractal}, alongside the appropriate rescalings:
    \begin{equation*}
        P_n(x) = \sum_{\xi \in \Sigma_{R_n}} e\bigl( x \cdot \xi \bigr), \quad \quad \phi_n(x) = R_n^{-2\gamma_d(\alpha)}P_{n}(q_nx).
    \end{equation*}
    It is straightforward to check that
    \begin{equation*}
        \norm{\phi_n}_{H^s} \lesssim (q_nR_n)^s \norm{\phi_n}_{L^2} \asymp q_n^sR_n^{s-\gamma_d(\alpha)}.
    \end{equation*}
    In particular, if we choose $(R_n)_{n \in \N}$ to grow sufficiently fast that $q_n^sR_n^{s-\gamma_d(\alpha)} \lesssim 2^{-n}$, then we have that $\phi_n \to 0$ in $H^s(\T^d)$. On the other hand, by parabolic rescaling we see that for every $x \in K$ there exists $t_x \in (0,q_n^{-2}]$ for which
    \begin{equation*}
        |e^{it\Delta}\phi_n(x)| \ge 1.
    \end{equation*}
    By the Banach principle (see e.g. \cite[Theorem 1.3]{Lit}) combined with \cite[Proposition A.1]{Eceizabarrena2022}, there exists some $f \in H^s(\T^d)$ for which
    \begin{equation*}
        \limsup_{t \to 0} |e^{it\Delta}f(x)| = \infty
    \end{equation*}
    for a subset of $K$ of positive $\mu$-measure; this completes the proof.
\end{proof}

\section{The case of dimension 1}\label{sec:dim1}

The analogue of Theorem \ref{thm:frequencies} for $d=1$ would be the following (unestablished) statement.

\begin{statement}\label{statement:frequenciesd=1}
    For all $R \in \N$, there exists an $R^{1/3}$-separated set of frequencies $\Sigma \subseteq \{-R,...,R\}$ with $|\Sigma| \asymp R^{2/3}$ and an $R^{-1}$-separated set $X \subseteq \T^1$ with $|X| \asymp R$ so that for all $x \in \Nc_{cR^{-1}}(X)$ we have
    \begin{equation*}
        \sup_{0 \le t \le 1}\left| \sum_{\xi \in \Sigma} e \bigl(x \cdot \xi + t|\xi|^2 \bigr) \right| \gtrsim R^\frac{2}{3}.
    \end{equation*}
\end{statement}

We contrast Statement \ref{statement:frequenciesd=1} with recent work of Demeter \cite{Demeter2024} on $L^4$-level set estimates for the maximal Schr\"odinger function on $\T^1$.

\begin{thm}[\cite{Demeter2024}, Theorem 1.1]\label{thm:Dem24}
    Suppose $R \gg 1$ and $R^{7/20} \lessapprox \lambda \lesssim R^{1/2}$. Then for any sequence $(b_n)_{n=1}^R$ of complex coefficients, we have
    \begin{equation*}
        \left|\left\{x \in \T^1 : \sup_{0 \le t \le 1}\biggl|\sum_{n=1}^R b_ne\bigl( x n + tn^2 \bigr)\biggr| \ge \norm{b}_{\ell^2}\lambda \right\} \right| \lessapprox \frac{R}{\lambda^4}.
    \end{equation*}
\end{thm}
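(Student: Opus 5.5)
The plan is a $TT^*$ argument combined with a major/minor arc analysis of the quadratic Weyl kernel. Fix $\lambda$ in the stated range and let $S$ be the level set. By the uncertainty principle, $\sum_n b_n e(xn+tn^2)$ is essentially constant on rectangles of size $R^{-1}\times R^{-2}$ in $(x,t)$. So I would pick a maximal $R^{-1}$-separated set $\{x_1,\dots,x_M\}\subseteq S$, so that $|S|\lesssim M/R$, together with a near-maximizing time $t_j$ for each $x_j$. The goal becomes $M\lessapprox R^2/\lambda^4$.

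By duality, choose unimodular $c_j$ with
\begin{equation*}
    M\lambda\norm{b}_{\ell^2}\le \sum_n b_n\sum_j c_je(x_jn+t_jn^2).
\end{equation*}
Cauchy--Schwarz in $n$ then gives
\begin{equation*}
    M^2\lambda^2\le \sum_{j,k}|K(x_j-x_k,t_j-t_k)|, \qquad K(x,t)=\sum_{n\le R}e(xn+tn^2).
\end{equation*}
The diagonal contributes $MR$, and this is harmless once $\lambda\gtrsim R^{1/2}M^{-1/2}$.

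For the off-diagonal terms I would use the standard Weyl sum bounds. Dirichlet approximation gives $t=a/q+\beta$ with $q\le R$ and $|\beta|\le 1/(qR)$, and then
\begin{equation*}
    |K(x,t)|\lesssim \frac{R}{q^{1/2}(1+R^2|\beta|)^{1/2}}.
\end{equation*}
Moreover $K$ is small unless $x$ lies within $O(q^{-1}(1+R^2|\beta|)/R)$ of some $b/q$. I would decompose the pairs $(j,k)$ dyadically according to $q\sim Q$ and $1+R^2|\beta|\sim B$; the contribution of each block is then at most $RQ^{-1/2}B^{-1/2}$ times the number $P_{Q,B}$ of pairs in that block. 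The whole argument then reduces to an incidence count. Pairs in the $(Q,B)$ block force $x_j-x_k$ into a union of about $Q^2$ intervals, and $t_j-t_k$ into a comparable union of windows near rationals with denominator about $Q$.

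Two counts come out immediately. First, $P_{Q,B}\le M^2$. Second, since each $x$ carries only one time and the $x_k$ are $R^{-1}$-separated, $P_{Q,B}\lesssim M\,Q^2B$. Plugging these in gives only a weaker, $L^8$-type bound, so the main obstacle is improving these counts to the $L^4$ level. I would try to exploit the two-dimensional structure of the incidence problem: the $x$-constraint and the $t$-constraint must hold simultaneously with the same denominator $q$, so one can count the quadruples in an additive-energy style and bound them using divisor bounds for the number of representations. Alternatively, one can pigeonhole the $x_j$ into scale-$Q^{-1}$ arcs and run a bilinear or induction-on-scales argument within each arc. I expect the lower threshold $R^{7/20}$ to be exactly where such a counting estimate stops dominating the trivial bound. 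In the remaining range, the terms with small $Q$ and large $B$ must be absorbed into the diagonal term.
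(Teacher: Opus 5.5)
The paper does not prove this statement. It is quoted from \cite{Demeter2024} and used only for comparison in Section~\ref{sec:dim1}, so your argument has to stand on its own. It does not: your proposal stops at the point where the theorem actually begins.

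Your reduction is fine as far as it goes. This covers the discretization to $R^{-1}$-separated $x_j$ with one time $t_j$ each, the $TT^*$ bound $M^2\lambda^2\le\sum_{j,k}|K(x_j-x_k,t_j-t_k)|$, and the Weyl bound $|K|\lesssim R(QB)^{-1/2}$ on the $(Q,B)$ block.

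To reach $M\lessapprox R^2/\lambda^4$ you need $\sum_{j\neq k}|K|\lessapprox RM^{3/2}$, that is, $P_{Q,B}\lessapprox M^{3/2}(QB)^{1/2}$ for every block. Your two counts give $P_{Q,B}\le\min(M^2,MQ^2B)$. This overshoots the requirement by the factor $\min\bigl((M/QB)^{1/2},(Q^3B/M)^{1/2}\bigr)\le (M/B)^{1/4}$, which reaches $M^{1/4}$ at $B\sim 1$, $Q\sim M^{1/2}$. The resulting bound is only $|S|\lessapprox R^3/\lambda^8$. That matches $R/\lambda^4$ only at $\lambda\sim R^{1/2}$, and it is trivial for $\lambda\le R^{3/8}$, which covers part of the claimed range $\lambda\gtrsim R^{7/20}$.

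The missing ingredient is a genuine incidence estimate. For $R^{-1}$-separated $x_j$ and arbitrary $t_j$, you must show that few difference vectors $(x_j-x_k,t_j-t_k)$ can land in the $(B/R)\times(B/R^2)$-neighbourhoods of the points $(b/q,a/q)$ with $q\sim Q$, for intermediate $Q$. Listing candidate tools (additive energy, divisor bounds, bilinear or induction-on-scales arguments) is not a proof. Nothing in your outline produces the exponent $7/20$, and your sentence about where the threshold ``should'' come from is a guess.

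There are also two smaller inaccuracies.

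First, your closing remark misplaces the difficulty. At $Q=1$ the trivial counts already give a block contribution $RM\min(MB^{-1/2},B^{1/2})\le RM^{3/2}$, which is exactly the target. So nothing at small $Q$ and large $B$ needs to be absorbed into the diagonal. The excess is at most $Q^{1/2}$, so it is driven by the denominator and peaks at $Q\sim M^{1/2}$ with $|\beta|\lesssim R^{-2}$.

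Second, the $x$-window around $b/q$ has width $\asymp(1+R^2|\beta|)/R$, not $q^{-1}(1+R^2|\beta|)/R$. This is because the stationary point of $\int_0^R e((x-b/q)y+\beta y^2)\,dy$ must lie in $[0,R]$. Your count $MQ^2B$ is the one the correct window gives, so the bookkeeping survives this slip.
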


For Statement \ref{statement:frequenciesd=1}, the relevant level sets are at height $\lambda \asymp R^{1/3}$, and since $R^{1/3} \ll R^{7/20}$, the regimes of \cite{Demeter2024} do not apply. It was conjectured in \cite[Conjecture 1.2]{Demeter2024} that Theorem \ref{thm:Dem24} could extend all the way to $R^{1/4} \leq \lambda \leq R^{1/2}$, which would establish the sufficiency of $s > \frac{1}{4}$ in $d = 1$. If one could extend Theorem \ref{thm:Dem24} to the range $\lambda \in [R^{\frac{1}{3}-c}, R^\frac{7}{20}]$, then Statement \ref{statement:frequenciesd=1} would be refuted; on the other hand, if Statement \ref{statement:frequenciesd=1} holds, then $s \ge \frac{1}{3}$ is necessary for pointwise convergence on $\T^1$, and \cite[Conjecture 1.2]{Demeter2024} would fail whenever $\lambda \asymp R^{1/3}$.

Note that the proof of Theorem \ref{thm:frequencies} completely breaks down in the case of dimension 1. The reason for this is that $\T^1$ rather trivially does not admit complex multiplication. More specifically, in the proof of Theorem \ref{thm:frequencies}, we locate a cyclic subgroup $G \le (\Zmod{N})^d$ of size $N$ whose nonzero elements all stay sufficiently far from the origin. When $d \ge 2$ this construction can be realized by using the ``diagonal directions'' inside of this non-cyclic subgroup, or equivalently by observing that $1 \in \Os_K/\pfrak$ is a generator in a ``twisted'' way, when $\pfrak \mid p$ is a split prime above some $p \in \Z$. On the other hand, when $d = 1$ it is obvious that no such ``geometrically nontrivial'' subgroup $G \le \Zmod{N}$ is available.

A plausible strategy towards trying to obtain counterexamples to \cite[Conjecture 1.2]{Demeter2024} would be via a projection argument. Take some initial datum in $f \in H^s(\T^d)$ given by
\begin{equation*}
    f(x) = \sum_{1 \le \xi \le R} a_{\xi} e(\xi x),
\end{equation*}
and lift to $\T^2$ by $F(x_1,x_2) = f(x_1)$ so $\hat{F}(n,m) = 1_{\{m=0\}}a_n$; the one-dimensional frequencies of $f$ are then supported along a horizontal line $\ell$ in two dimensions. Theorem \ref{thm:frequencies} at $d=2$ would give an $R^{1/2}$-separated set of frequencies $\Sigma \subseteq \{1,...,R\}^2$ with $|\Sigma| \asymp R$ and an $R^{-1}$-separated set $X \subseteq \T^2$ with $|X| \asymp R^2$ so that for all $x \in \Nc_{cR^{-1}}(X)$,
\begin{equation*}
    \sup_{0 \le t \le 1}\left| \sum_{\xi \in \Sigma} e \bigl(x \cdot \xi + t|\xi|^2 \bigr) \right| \gtrsim R.
\end{equation*}
Due to the $R^{1/2}$-separation condition on the frequencies $\Sigma$, a given horizontal line $\ell$ can contain at most $R^{1/2}$ of the frequencies. In particular, even a line $\ell$ were to saturate the packing, we could expect at best an estimate of the form
\begin{align}
    \sup_{0 \le t \le 1}\left| \sum_{\xi \in \Sigma \cap \ell} e \bigl(x \cdot \xi + t|\xi|^2 \bigr) \right| \asymp R^{1/2}.
\end{align}
This would only give the necessity of $s \geq \frac{1}{4}$, and therefore provide no additional gain over the examples of Bourgain.

\section{AI Disclosure}\label{sec:AI}

The original proofs of Theorems \ref{thm:schrodingerlowerbd} and \ref{thm:fractalregularity} were found by ChatGPT 6 Astra on September 17, 2026, in a discussion on previous results by the authors; the proofs are available on the first author's website \href{https://sites.google.com/wisc.edu/inbo/research?authuser=0}{here}. The AI proof of Theorem \ref{thm:frequencies} used factorization over number fields in the form of Fermat's two squares theorem, but was otherwise essentially combinatorial, involving certain counting estimates for subgroups of $(\Zmod{N})^d$ for $N$ a large prime (or square of a large prime). The authors subsequently realized that the proof could be significantly simplified and made deterministic by interpreting the relevant objects in the language of CM abelian varieties; the resulting argument is what is presented here. The proof of Theorem \ref{thm:fractalregularity} was simplified from its original form, but with no essential new ideas introduced. All writing is human-generated.

\nocite{*}
\printbibliography

@article{Eceizabarrena2022,
  author  = {Eceizabarrena, Daniel and Luc{\`a}, Renato},
  title   = {Convergence over fractals for the periodic {Schr{\"o}dinger} equation},
  journal = {Analysis \& PDE},
  year    = {2022},
  volume  = {15},
  number  = {7},
  pages   = {1775--1805},
  doi     = {10.2140/apde.2022.15.1775}
}

@article{CompaanLucaStaffilani2021,
  author  = {Compaan, Erin and Luc{\`a}, Renato and Staffilani, Gigliola},
  title   = {Pointwise convergence of the {S}chr{\"o}dinger flow},
  journal = {International Mathematics Research Notices},
  year    = {2021},
  volume  = {2021},
  number  = {1},
  pages   = {596--650},
  doi     = {10.1093/imrn/rnaa036}
}

@article{Bourgain2016,
  author  = {Bourgain, J.},
  title   = {A note on the Schrödinger maximal function},
  journal = {Journal d'Analyse Mathématique},
  year    = {2016},
  volume  = {130},
  pages   = {393--396},
  doi     = {10.1007/s11854-016-0042-8}
}

@article{BourgainDemeter2015,
  author  = {Bourgain, Jean and Demeter, Ciprian},
  title   = {The proof of the {$l^2$} decoupling conjecture},
  journal = {Annals of Mathematics},
  year    = {2015},
  volume  = {182},
  number  = {1},
  pages   = {351--389},
  doi     = {10.4007/annals.2015.182.1.9}
}

@inproceedings{Carleson1980,
  author    = {Carleson, Lennart},
  title     = {Some analytic problems related to statistical mechanics},
  booktitle = {Euclidean Harmonic Analysis},
  editor    = {Burdock, J. J.},
  series    = {Lecture Notes in Mathematics},
  volume    = {779},
  pages     = {5--45},
  publisher = {Springer},
  address   = {Berlin, Heidelberg},
  year      = {1980},
  doi       = {10.1007/BFb0087666}
}

@article{DuZhang2019,
  author  = {Du, Xiumin and Zhang, Ruixiang},
  title   = {Sharp {$L^2$} estimates of the {Schr{\"o}dinger} maximal function in higher dimensions},
  journal = {Annals of Mathematics},
  year    = {2019},
  volume  = {189},
  number  = {3},
  pages   = {837--861},
  doi     = {10.4007/annals.2019.189.3.4}
}

@article{DuGuthLi2017,
  author  = {Du, Xiumin and Guth, Larry and Li, Xiaochun},
  title   = {A sharp {Schr{\"o}dinger} maximal estimate in {$\mathbb{R}^2$}},
  journal = {Annals of Mathematics},
  year    = {2017},
  volume  = {186},
  number  = {2},
  pages   = {607--640},
  doi     = {10.4007/annals.2017.186.2.5}
}

@article{LucaRogers2019,
  author  = {Luc{\`a}, Renato and Rogers, Keith M.},
  title   = {A note on pointwise convergence for the {Schr{\"o}dinger} equation},
  journal = {Mathematical Proceedings of the Cambridge Philosophical Society},
  year    = {2019},
  volume  = {166},
  number  = {2},
  pages   = {209--218},
  doi     = {10.1017/S0305004117000743}
}

@article{DuKimWangZhang2020,
  author  = {Du, Xiumin and Kim, Jongchon and Wang, Hong and Zhang, Ruixiang},
  title   = {Lower bounds for estimates of the {Schr{\"o}dinger} maximal function},
  journal = {Mathematical Research Letters},
  year    = {2020},
  volume  = {27},
  number  = {3},
  pages   = {687--692},
  doi     = {10.4310/MRL.2020.v27.n3.a4}
}

@article{Demeter2024,
  author  = {Demeter, Ciprian},
  title   = {Level set estimates for the periodic {S}chr{\"o}dinger maximal function on {$\mathbb{T}^1$}},
  journal = {Advances in Mathematics},
  year    = {2024},
  doi     = {10.48550/arXiv.2402.01099},
  note    = {To appear}
}

@article{MiaYuaZhaBar, title={Maximal estimates for Weyl sums on T d}, volume={284}, ISSN={00221236}, url={https://linkinghub.elsevier.com/retrieve/pii/S0022123622003676}, DOI={10.1016/j.jfa.2022.109747}, number={2}, journal={Journal of Functional Analysis}, author={Miao, Changxing and Yuan, Jiye and Zhao, Tengfei and Barron, Alex}, year={2023}, month=jan, pages={109747}, language={en} }

@article{Lit, title={On continuity at zero of the maximal operator for a semifinite measure}, volume={135}, ISSN={0010-1354, 1730-6302}, url={http://journals.impan.pl/cgi-bin/doi?cm135-1-6}, DOI={10.4064/cm135-1-6}, number={1}, journal={Colloquium Mathematicum}, author={Litvinov, Semyon}, year={2014}, pages={79–84}, language={en} }

@article{LucPon, title={Convergence over fractals for the Schroedinger equation}, volume={71}, ISSN={0022-2518}, url={http://www.iumj.indiana.edu/IUMJ/fulltext.php?artid=9302&year=2022&volume=71}, DOI={10.1512/iumj.2022.71.9302}, number={6}, journal={Indiana University Mathematics Journal}, author={Luca, R. and Ponce-Vanegas, Felipe}, year={2022}, pages={2283–2307}, language={en} }

@misc{FenTan, title={Maximal estimates for perturbations of the Schr\"odinger operator on $\mathbb{T}^d$}, url={https://arxiv.org/abs/2608.07464v1}, journal={arXiv.org}, author={Fenves, Inbo Gottlieb and Tan, Jiahao}, year={2026}, month=aug, language={en} }

@article{CenGaoZha, title={A sharp regularity threshold for Schrödinger maximal estimates on standard tori}, url={http://arxiv.org/abs/2609.29047}, DOI={10.48550/arXiv.2609.29047}, note={arXiv:2609.29047 [math.AP]}, number={arXiv:2609.29047}, publisher={arXiv}, author={Cen, Xi and Gao, Xitao and Zhang, Junyong}, year={2026}, month=sept }

@article{Barron2022,
  author  = {Barron, Alex},
  title   = {An {$L^4$} maximal estimate for quadratic {W}eyl sums},
  journal = {International Mathematics Research Notices},
  year    = {2022},
  volume  = {2022},
  number  = {22},
  pages   = {17822--17849},
  doi     = {10.1093/imrn/rnab181}
}

@incollection{DahKen,
  author    = {Dahlberg, B. E. J. and Kenig, C. E.},
  title     = {A note on the almost everywhere behaviour of solutions to the {Schr{\"o}dinger} equation},
  booktitle = {Harmonic Analysis: Proceedings of a Conference Held at the University of Minnesota, Minneapolis, April 20--30, 1981},
  series    = {Lecture Notes in Mathematics},
  volume    = {908},
  pages     = {205--209},
  year      = {1982},
  publisher = {Springer},
  address   = {Berlin, Heidelberg}
}

@article{BarBenCarRog, title={On the dimension of divergence sets of dispersive equations}, volume={349}, rights={http://www.springer.com/tdm}, ISSN={0025-5831, 1432-1807}, url={http://link.springer.com/10.1007/s00208-010-0529-z}, DOI={10.1007/s00208-010-0529-z}, number={3}, journal={Mathematische Annalen}, author={Barceló, Juan Antonio and Bennett, Jonathan and Carbery, Anthony and Rogers, Keith M.}, year={2011}, month=mar, pages={599–622}, language={en} }

@article{MoyVeg, title={Bounds for the maximal function associated to periodic solutions of one-dimensional dispersive equations}, volume={40}, rights={http://doi.wiley.com/10.1002/tdm_license_1.1}, ISSN={00246093}, url={http://doi.wiley.com/10.1112/blms/bdm096}, DOI={10.1112/blms/bdm096}, number={1}, journal={Bulletin of the London Mathematical Society}, author={Moyua, A. and Vega, L.}, year={2008}, month=feb, pages={117–128}, language={en} }

@article{Bak, title={$L^p$ maximal estimates for quadratic Weyl sums}, volume={165}, ISSN={0236-5294, 1588-2632}, url={https://link.springer.com/10.1007/s10474-021-01173-3}, DOI={10.1007/s10474-021-01173-3}, number={2}, journal={Acta Mathematica Hungarica}, author={Baker, R.}, year={2021}, month=dec, pages={316–325}, language={en} }

@article{EceYu, title={Uniform periodic counterexamples to Carleson’s convergence problem with polynomial symbols}, url={http://arxiv.org/abs/2408.13935}, DOI={10.48550/arXiv.2408.13935}, note={arXiv:2408.13935 [math.AP]}, number={arXiv:2408.13935}, publisher={arXiv}, author={Eceizabarrena, Daniel and Yu, Xueying}, year={2025}, month=sept }

@article{MocTao, title={Restriction and Kakeya phenomena for finite fields}, volume={121}, ISSN={0012-7094}, url={https://projecteuclid.org/journals/duke-mathematical-journal/volume-121/issue-1/Restriction-and-Kakeya-phenomena-for-finite-fields/10.1215/S0012-7094-04-12112-8.full}, DOI={10.1215/S0012-7094-04-12112-8}, number={1}, journal={Duke Mathematical Journal}, author={Mockenhaupt, Gerd and Tao, Terence}, year={2004}, month=jan }

\vspace{\fill}
\noindent
\textsc{Department of Mathematics, University of Wisconsin--Madison, 480 Lincoln Dr., Madison, WI 53706, USA}\\[2pt]
\textit{Email address:} \texttt{gottliebfenv@wisc.edu}\\
\textit{Email address:} \texttt{jtan84@wisc.edu}
\end{document}